\documentclass[12pt]{amsart}
\usepackage{amsmath, amscd, amssymb}
\usepackage{tikz}
\usepackage{graphpap, color}
\usepackage[mathscr]{eucal}
\usepackage{mathrsfs}
\usepackage{pstricks}
\usepackage{cancel}
\usepackage[mathscr]{eucal}
\usepackage{verbatim}
\usepackage[all]{xy}
\usepackage{stmaryrd}

\usepackage[margin=2cm]{geometry}
\linespread{1.3}

\def\cA{{\cal A}}

%


\numberwithin{equation}{section}

\def\sO{{\mathscr O}}

\def\sL{{\mathscr L}}

\def\sO{\mathscr{O}}

\newcommand{\CC}{\mathbb{C}}

\newcommand{\PP}{\mathbb{P}}
\newcommand{\QQ}{\mathbb{Q}}

\newcommand{\ZZ}{\mathbb{Z}}

\newcommand{\TT}{\mathbb{T}}



\newcommand{\cal}{\mathcal}

\def\cC{{\cal C}}
\def\cD{{\cal D}}
\def\cE{{\cal E}}
\def\cF{{\cal F}}

\def\cK{{\cal K}}

\def\cM{{\cal M}}

\def\cV{{\cal V}}

\def\cS{{\cal S}}

\def\cY{{\cal Y}}
\def\cZ{{\cal Z}}
\def\cI{{\cal I}}


\def\fC{\mathfrak{C}}
\def\fD{\mathfrak{D}}

\def\fX{\mathfrak{X}}










\def\a{\mathbf a}
\def\Lau#1{\llceil{#1}\rrceil}

\def\lra{\longrightarrow}


\newcommand{\si}{\sigma}


\def\begeq{\begin{equation}}
\def\endeq{\end{equation}}
\def\and{\quad{\rm and}\quad}

\def\and{\quad\text{and}\quad}




\newtheorem{prop}{Proposition}[section]

\newtheorem{theo}[prop]{Theorem}
\newtheorem{lemm}[prop]{Lemma}
\newtheorem{coro}[prop]{Corollary}

\newtheorem{defi}[prop]{Definition}

\newtheorem{defi-prop}[prop]{Definition-Proposition}


\def\ev{\mathrm{ev}}

\def\sO{{\mathscr O}}

\def\beq{\begin{equation}}
\def\eeq{\end{equation}}

\def\vsp{\vskip5pt}

\def\bee{\begin{equation}}
\def\eeq{\end{equation}}

\def\sC{{\mathscr C}}

\def\bd{{\mathbf d}}

\def\ti{\tilde}

\def\coeff#1{\llbracket{#1}\rrbracket}
\def\bigcoeff#1{\big\llbracket{#1}\big\rrbracket}

\def\hb{\hbar}







\def\Gr{\mathbf{Gr}}
\def\e{\mbox{e}}
\title[Double $J$-function of stable quasimap invariants]{Double $J$-function of stable quasimap invariants for complete intersection in Grassmannian}

\author[M.-L.~Li]{Mu-lin Li}
\address{College of Mathematics and Econometrics, Hunan University, China} \email{mulin@hnu.edu.cn}
\begin{document}

\begin{abstract}
Using localization methods on the moduli space of stable quasimaps to Grassmannian, we give explicit formulas of $J$-function and double $J$-function of stable quasimaps for complete intersection in Grassmannian.
\end{abstract}

\maketitle

\section{introduction}

The moduli space of stable quotients was first constructed and studied by Marian, Oprea and Pandharipande \cite{MOP}. Cooper and Zinger \cite{CZ} calculated the $J$-function of stable quotients for projective complete intersections, and proved that it is related to the genus zero Gromov-Witten invariants by mirror map. Later Zinger \cite{Zinger1} calculated the double and the triple $J$-functions of stable quotients for projective complete intersections. Ciocan-Fontanine, Kim and Maulik~\cite{FKM} generalized the definition of stable quotients to arbitrary GIT quotients by the notion of stable quasimaps. Ciocan-Fontanine and Kim proved that the genus zero stable quasimap invariants (including twisted cases) are related to stable map by mirror map in \cite{FK}, and for the higher genus cases of projective complete intersections in \cite{FK3}.  In this paper, using the abelian non-abelian relation between Grassamannian and product of projective spaces as in \cite{BFK}, we analyze the $J$-function and the double $J$-function of stable quasimaps for complete intersection in Grassmannian cases, which should play a key role in the computation of the genus one  stable quasimap invariants for complete intersection in Grassmannian.

Let $\Gr:=\Gr(2,n)$ be the space of all $2$-dimensional subspaces of $\CC^n$. Recall that the cohomology of $\Gr$ has the presentation:
$$
H^*(\Gr)\cong\QQ[e_1,e_2]/\langle h_{n-1},h_n \rangle,
$$
where $e_1,\, e_2$ are the elementary symmetric polynomials of two variables $x_1,x_2$ (the Chern roots of the dual universal bundle), and $h_{i}$ is
the $i$-th complete symmetric polynomial (sums of all monomials of degree $i$) of $x_1,x_2$. Denote by $x=(x_1,x_2)$. Let $\{\gamma^k_j\}$ be a basis of $ H^*(\Gr)$ such that the Poincar\'{e} dual of the diagonal class $[\Delta]$ can be written as
$$
[\Delta]=\sum_{k,j}\gamma^k_j(x)\otimes\gamma^{2(n-2)-k}_j(x)\in H^*(\Gr)\otimes H^*(\Gr),
$$
 where $\gamma^k_j$ is a symmetry polynomial of degree $k$ in $x_1,x_2$. 
 
Denote by $\overline{Q}_{0,2}(\Gr,d)$ the moduli of genus zero  and degree $d$ stable quasimaps to $\Gr$ with two marked points. Let
$$\ev_i:\overline{Q}_{0,2}(\Gr,d)\to \Gr$$
be the evaluation map at the $i$-th marked point, see \cite{FK}. For each $i=1,2$, let $\psi_i$ be the first Chern class of the universal cotangent line bundle for the $i$-th marked point. Let
$$
\pi:\cC\to \overline{Q}_{0,2}(\Gr,d)
$$
be the universal family, and $\sigma_1,\sigma_2$ be the sections given by the marked points. Let
$$
\cS\subset\CC^n\otimes\sO_{\cC}
$$
be the universal bundle. Let $\sL:=\det\cS^{\vee}$.

For $\ell\!\in\!\ZZ^{\ge0}$ and $\ell$-tuple $\a\!=\!(a_1,\ldots,a_{\ell})\!\in\!(\ZZ^{>0})^{\ell}$ of positive integers,
denote
\begin{gather*}
|\a|=\sum_{k=1}^{\ell}a_k,  \qquad
\langle\a\rangle=\prod_{k}\!a_k\,, \qquad
\a!=\prod_{k}\!\!a_k!\,, \qquad
\a^{\a}=\prod_{k=1}^{\ell} a_k^{a_k}.
\end{gather*}

Let
\beq\label{prod_e0}
\cV_{n;\a}^{(d)}:=\bigoplus_{k}R^0\pi_*\big(\sL^{\otimes a_k}\big)
 \lra \overline{Q}_{0,2}(\Gr,d),\eeq
\beq\label{prod_e}
\dot\cV_{n;\a}^{(d)}:=\bigoplus_{k}R^0\pi_*\big(\sL^{\otimes a_k}(-\si_1)\big)
 \lra \overline{Q}_{0,2}(\Gr,d)\eeq
 and
 \beq\ddot\cV_{n;\a}^{(d)}:=\bigoplus_{k}R^0\pi_*\big(\sL^{\otimes a_k}(-\si_2)\big)
 \lra \overline{Q}_{0,2}(\Gr,d).
 \eeq
In particular, $\cV_{n;\a}^{(d)}$, $\dot\cV_{n;\a}^{(d)}$ and $\ddot\cV_{n;\a}^{(d)}$ are locally free sheaves.
The $J$-functions associated to stable quotients are defined as
\beq\label{cZ1}
\dot{Z}_{n;\a}(x,\hbar,q) :=
1+\sum_{d=1}^{\infty}q^d \ev_{1*}\!\!\left[\frac{\mathbf{e}(\dot\cV_{n;\a}^{(d)})}{\hbar\!-\!\psi_1}\right]
\in H^*(\Gr)[\hbar^{-1}]\big[\big[q\big]\big],\eeq

\beq\label{ZZdfn_e1}
\ddot{Z}_{n;\a}(x,\hbar,q) :=
1+\sum_{d=1}^{\infty}q^d \ev_{1*}\!\!\left[\frac{\mathbf{e}(\ddot\cV_{n;\a}^{(d)})}{\hbar\!-\!\psi_1}\right]
\in H^*(\Gr)[\hbar^{-1}]\big[\big[q\big]\big].\eeq

 The double $J$-function is defined as
\beq\label{ZZdfn1}
\dot{Z}_{n;\a}(\hbar_1,\hbar_2,q) :=
\frac{[\Delta]}{\hbar_1+\hbar_2}+\sum_{d=1}^{\infty}q^d\big\{\ev_1\!\times\!\ev_2\}_*\!\!\left[\frac{\mathbf{e}(\dot\cV_{n;\a}^{(d)})}
{(\hbar_1\!-\!\psi_1)(\hbar_2\!-\!\psi_2)}\right]
\eeq
$$
\in H^*(\Gr)[\hbar_1^{-1},\hbar_2^{-1}]\big[\big[q\big]\big].$$

Let
\beq\label{Zgadfn_e1}\begin{split}
\dot{Z}_{\gamma^k_j}(\hbar,q) &:= \gamma^k_j(x)+
\sum_{d=1}^{\infty}\!q^d \ev_{1*}\!\!\left[\frac{\mathbf{e}(\dot\cV_{n;\a}^{(d)})\ev_2^*\gamma^k_j(x)}
{\hbar\!-\!\psi_1}\right]\in H^*(\Gr)[\hbar^{-1}]\big[\big[q\big]\big],\\
\ddot{Z}_{\gamma^k_j}(\hbar,q) &:= \gamma^k_j(x)+
\sum_{d=1}^{\infty}\!q^d \ev_{1*}\!\!\left[\frac{\mathbf{e}(\ddot\cV_{n;\a}^{(d)})\ev_2^*\gamma^k_j(x)}
{\hbar\!-\!\psi_1}\right]\in H^*(\Gr)[\hbar^{-1}]\big[\big[q\big]\big].
\end{split}\eeq

Let
\begin{eqnarray*}
\dot{Y}_{n;\a}(x,\hbar,q)=\sum_{d\ge0}q^d \dot{Y}_d(\hbar),\\
\ddot{Y}_{n;\a}(x,\hbar,q)=\sum_{d\ge0}q^d \ddot{Y}_d(\hbar),
\end{eqnarray*}
where
\begin{eqnarray*}
\dot{Y}_d(\hbar)=(-1)^d\sum_{d_1+d_2=d}\frac{(x_1-x_2+(d_1-d_2)\hbar)\prod_{k=1}^{\ell}\prod_{l=1}^{a_kd}(a_k(x_1+x_2)+l\hbar)}{(x_1-x_2)\prod_{i=1}^2\prod_{l=1}^{d_i}\bigg((x_i+l\hbar)^n-x_i^n\bigg)},\\
\ddot{Y}_d(\hbar)=(-1)^d\sum_{d_1+d_2=d}\frac{(x_1-x_2+(d_1-d_2)\hbar)\prod_{k=1}^{\ell}\prod_{l=0}^{a_kd-1}(a_k(x_1+x_2)+l\hbar)}{(x_1-x_2)\prod_{i=1}^2\prod_{l=1}^{d_i}\bigg((x_i+l\hbar)^n-x_i^n\bigg)}.
\end{eqnarray*}

Denote by 

\[ \dot{I}_{n;\a}(q):=
  \begin{cases}
    1      & \quad \text{if } |\a|<n\\
    \dot{Y}_{n;\a}(0,0,1,q)  & \quad \text{if } |\a|=n\\
  \end{cases}
,\]

 $\ddot{I}_{n;\a}(q):=1$. Let $D^{k,j}$, $\dot{C}^{(r)}_{k,i;s,j}, \ddot{C}^{(r)}_{k,i;s,j}\!\in\!\QQ[[q]]$  be defined as in Section 4.
\begin{theo}\label{thm1}
If $\ell\!\in\!\ZZ^{\ge0}$, $n\!\in\!\ZZ^+$, and $\a\!\in\!(\ZZ^{>0})^{\ell}$ are such that
$|\a|\!\le\!n$, then
$$ \dot{Z}_{n;\a}(x,\hbar,q)=\frac{\dot{Y}_{n;\a}(x,\hbar,q)}{\dot{I}_{n;\a}(q)}
\in H^*(\Gr)[\hbar^{-1}]\big[\big[q\big]\big],$$
and
$$ \ddot{Z}_{n;\a}(x,\hbar,q)=\frac{\ddot{Y}_{n;\a}(x,\hbar,q)}{\ddot{I}_{n;\a}(q)}
\in H^*(\Gr)[\hbar^{-1}]\big[\big[q\big]\big].$$
\end{theo}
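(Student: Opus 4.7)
The plan is to combine $T$-equivariant localization on $\overline{Q}_{0,2}(\Gr,d)$ with the abelian/non-abelian correspondence of \cite{BFK}, extending the argument of \cite{CZ} from projective space to the Grassmannian. First I would lift the diagonal $T=(\CC^\ast)^n$-action on $\CC^n$ to $\overline{Q}_{0,2}(\Gr,d)$ and to the sheaves $\dot\cV_{n;\a}^{(d)}$ and $\ddot\cV_{n;\a}^{(d)}$; the classes $\psi_i$, $\mathbf{e}(\dot\cV_{n;\a}^{(d)})$, and $\ev_i^\ast\gamma^k_j(x)$ all carry natural equivariant lifts. Atiyah-Bott localization then rewrites $\dot Z_{n;\a}$ and $\ddot Z_{n;\a}$ as sums of residues over $T$-fixed loci, and the theorem reduces to matching those residues with the explicit sums defining $\dot Y_{n;\a}$ and $\ddot Y_{n;\a}$.

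The key combinatorial input is the fixed-locus description. The generic $T$-fixed loci of $\overline{Q}_{0,2}(\Gr,d)$ are indexed by a pair $\{i_1,i_2\}\subset\{1,\ldots,n\}$ (a $T$-fixed $2$-plane in $\Gr$) together with a bidegree decomposition $d_1+d_2=d$, and on such a locus the source curve is $\PP^1$ with marked points at $0$ and $\infty$ and the universal subbundle $\cS$ abelianizes to a direct sum of line bundles of degrees $d_1$ and $d_2$. Hence $\sL^{a_k}(-\si_1)$ specializes to $\cO_{\PP^1}(a_k d)(-0)$, whose $T\times\CC_\hbar^\ast$-weights on global sections are $a_k(x_{i_1}+x_{i_2})+l\hbar$ for $l=1,\ldots,a_k d$ --- exactly the numerator product appearing in $\dot Y_d$. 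Replacing $\si_1$ by $\si_2$ shifts this range to $l=0,\ldots,a_k d-1$, producing $\ddot Y_d$. The virtual normal bundle contributes the $\PP^{n-1}$-type denominator $\prod_{i=1,2}\prod_{l=1}^{d_i}\bigl((x_i+l\hbar)^n-x_i^n\bigr)$, while the abelian/non-abelian correspondence \cite{BFK} produces the extra factor $(x_1-x_2+(d_1-d_2)\hbar)/(x_1-x_2)$ and the sign $(-1)^d$ arising from the Weyl $S_2$-anti-symmetrizer and the comparison of $T\Gr$ with the tangent bundle of the abelianization. Since quasimap moduli forbid rational tails at marked points \cite{FKM}, each such locus contributes a single clean residue.

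To pass from $\dot Y_{n;\a}$ to $\dot Z_{n;\a}$ I would invoke Givental's uniqueness for the $J$-function: it is the unique element on the Lagrangian cone with leading term $1\in H^\ast(\Gr)$ at $q=0$. When $|\a|<n$ the function $\dot Y_{n;\a}$ already satisfies $\dot Y|_{q=0}=1$, hence $\dot I_{n;\a}=1$ and $\dot Z=\dot Y$; the same holds for $\ddot Y$ without any restriction on $|\a|$, giving $\ddot I_{n;\a}\equiv 1$. In the critical case $|\a|=n$ the constant term $\dot Y_{n;\a}(0,0,1,q)=\dot I_{n;\a}(q)$ becomes a nontrivial power series, and dividing $\dot Y$ by this series restores the correct normalization while preserving the Lagrangian-cone property, yielding $\dot Z_{n;\a}=\dot Y_{n;\a}/\dot I_{n;\a}$.

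The main obstacle is the analysis of fixed loci where the underlying quasimap has a base point at a marked point or at a node: unlike the $\PP^{n-1}$ setting of \cite{CZ}, on $\Gr$ such boundary strata carry richer combinatorics, and their residues must be shown to either cancel or repackage into the announced sum over $(i_1,i_2,d_1,d_2)$ after the BFK anti-symmetrization. Handling this rigorously requires a careful application of the recursive fixed-point description of $\overline{Q}_{0,2}(\Gr,d)$ from \cite{FKM,FK}.
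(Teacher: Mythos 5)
Your proposal assembles the right ingredients in outline (equivariant lifts, localization over fixed loci, the abelian/non-abelian correspondence, a uniqueness principle), and in that broad sense it is in the same spirit as the paper. But the two load-bearing steps are missing or set up incorrectly. First, the fixed-locus picture underlying your ``direct residue matching'' is too coarse: the $\TT$-fixed loci of $\overline{Q}_{0,2}(\Gr,d)$ are not indexed simply by a pair $\{i_1,i_2\}$ and a splitting $d_1+d_2=d$ with source $\PP^1$. They are indexed by decorated strands whose vertices carry Hassett moduli spaces $\overline{M}_{0,2|\mathfrak{d}(v)}$ recording base points, and the degree distributes over both the edges and the base-point divisors at the vertices. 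Summing residues over all such strands does not visibly reorganize into the closed sum over $(d_1,d_2)$ defining $\dot{Y}_d$; you flag this difficulty in your last paragraph but do not resolve it, and it is precisely the step the paper's method is designed to avoid. The paper never matches residues termwise: it proves (Lemma~\ref{recur}, following \cite{CZ} and \cite{FK3}) that $\dot{\cZ}_{n;\a}$ is $C$-recursive and satisfies the self/mutual polynomiality condition, proves separately (Propositions~\ref{Mut}, \ref{cFrec_lmm2} and Corollary~\ref{cIrec_lmm2}) that $\dot{\cY}_{n;\a}$ --- built from the $\PP^{n-1}\times\PP^{n-1}$ hypergeometric series by setting $q_1=q_2=q\e^{\pi i}$ and adding the $\hbar(q_1\frac{d}{dq_1}-q_2\frac{d}{dq_2})/(\mathbf{x}_1-\mathbf{x}_2)$ correction, which is exactly where the factor $(x_1-x_2+(d_1-d_2)\hbar)/(x_1-x_2)$ and the sign $(-1)^d$ come from --- satisfies the \emph{same} recursion with the same coefficients $\dot{C}^{ik}_{ij}(d)$ and the same polynomiality, and only then concludes by uniqueness. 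These verifications are the substance of the proof and are absent from your argument.

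Second, your uniqueness step does not close. Invoking ``the unique element on the Lagrangian cone with leading term $1$'' presupposes that $\dot{Y}_{n;\a}/\dot{I}_{n;\a}$ lies on the quasimap Lagrangian cone, which is not established and is essentially equivalent to the theorem being proved. Similarly, for $|\a|<n$ the observation that $\dot{Y}|_{q=0}=1$ does not by itself give $\dot{Z}=\dot{Y}$. The paper handles $|\a|\le n-2$ by a dimension count showing both sides are $\equiv 1\ (\mathrm{mod}\ \hbar^{-2})$, and for $|\a|=n-1$ and $|\a|=n$ it compares the recursions for the coefficients $\dot{\cZ}^r_{ij}(d)$ and $\dot{\cY}^r_{ij}(d)/\dot{\cI}$ and then applies \cite[Lemma~30.3.2]{MirSym}; the $\ddot{Z}$ case further requires the $1$-MPC pairing against $\dot{\cZ}$ together with Proposition~\ref{uniqueness_prp}, since $\ddot{\cY}$ and $\ddot{\cZ}$ only visibly agree mod $\hbar^{-1}$. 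To make your proof complete you would need to supply the recursivity and polynomiality of both $\dot{Y}$ (in its equivariant form) and $\dot{Z}$, and replace the Lagrangian-cone appeal by the recursion-plus-polynomiality uniqueness lemma.
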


\begin{theo}\label{thm2}
If $\ell\!\in\!\ZZ^{\ge0}$, $n\!\in\!\ZZ^+$, and $\a\!\in\!(\ZZ^{>0})^{\ell}$ are such that
$|\a|\!\le\!n$, then
\begin{equation*}
\dot{Z}_{\gamma^k_j}(\hbar,q)=\dot{Y}_{\gamma^k_j}(\hbar,q)\!\in\!H^*(\Gr)[\hbar^{-1}][[q]],
\end{equation*}
and
\begin{equation*}
\ddot{Z}_{\gamma^k_j}(\hbar,q)=\ddot{Y}_{\gamma^k_j}(\hbar,q)\!\in\!H^*(\Gr)[\hbar^{-1}][[q]],
\end{equation*}
where
\begin{equation*}
\dot{Y}_{\gamma^k_j}(\hbar,q):= D^{k,j}\dot{Y}_{n;\a}(x,\hbar,q)+\sum_{t=1}^{k}\sum_{s=0}^{k-t}\dot{C}^{(t)}_{k,j;s,i}(q)\hbar^{k-t-s}
D^{s,i}\dot{Y}_{n;\a}(x,\hbar,q)\!\in\!\QQ(x,\hbar)[[q]],
\end{equation*}
\begin{equation*}
\ddot{Y}_{\gamma^k_j}(\hbar,q):= D^{k,j}\ddot{Y}_{n;\a}(x,\hbar,q)+\sum_{t=1}^{k}\sum_{s=0}^{k-t}\ddot{C}^{(t)}_{k,j;s,i}(q)\hbar^{k-t-s}
D^{s,i}\ddot{Y}_{n;\a}(x,\hbar,q)\!\in\!\QQ(x,\hbar)[[q]].
\end{equation*}
\end{theo}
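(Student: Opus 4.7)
The plan is to deduce Theorem \ref{thm2} from Theorem \ref{thm1} by the same localization and Birkhoff-factorization strategy that Zinger used in \cite{Zinger1}, but adapted to the Grassmannian through the abelian/non-abelian correspondence of \cite{BFK}. First I would set up the natural $(\CC^*)^n$-action on $\CC^n$, which descends to $\Gr(2,n)$ and lifts to $\overline{Q}_{0,2}(\Gr,d)$ together with the bundles $\dot\cV_{n;\a}^{(d)}$ and $\ddot\cV_{n;\a}^{(d)}$. Equivariant localization expresses each $\dot Z_{\gamma^k_j}(\hbar,q)$ and $\ddot Z_{\gamma^k_j}(\hbar,q)$ as a sum over the torus-fixed loci of $\overline{Q}_{0,2}(\Gr,d)$, indexed by coordinate $2$-planes in $\Gr$ together with a splitting $d=d_1+d_2$ of the degree between the two marked points.

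Next I would use the abelian/non-abelian correspondence to match each fixed-point contribution with its counterpart on $(\PP^{n-1})^2$. The antisymmetric factor $(x_1-x_2+(d_1-d_2)\hbar)/(x_1-x_2)$ already visible in $\dot Y_d(\hbar)$ and $\ddot Y_d(\hbar)$ is precisely the Weyl twist produced by this comparison, and on the $(\PP^{n-1})^2$ side the desired identity factors by the K\"unneth formula into two copies of the single-projective-space statement established in \cite{Zinger1}. In this way most of the fixed-point calculation is reduced to known results, and the Grassmannian identity follows by averaging over $S_2$ with the $(x_1-x_2)$-twist.

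The coefficients $\dot C^{(t)}_{k,j;s,i}(q)$ and $\ddot C^{(t)}_{k,j;s,i}(q)$ and the operators $D^{k,j}$ from Section 4 are then read off from the Birkhoff factorization of the fundamental solution of the quantum differential equation along the basis $\{\gamma^k_j\}$. Theorem \ref{thm1} supplies the column indexed by $\gamma^0_0=1$; applying each $D^{k,j}$ to this column and successively peeling off the part that is non-regular at $\hbar=0$ recovers the remaining columns, reproducing the right-hand sides of Theorem \ref{thm2}. The closing step is the standard uniqueness fact that two elements of $H^*(\Gr)[\hbar^{-1}][[q]]$ whose fixed-point restrictions agree must coincide.

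The main obstacle will be controlling the bookkeeping in the presence of two simultaneous structures: the Weyl-group twist from the abelian/non-abelian correspondence, and the descendant insertion $\mathbf{e}(\dot\cV_{n;\a}^{(d)})\,\ev_2^*\gamma^k_j/(\hbar-\psi_1)$ which mixes the two $\PP^{n-1}$-factors through the symmetric polynomials $\gamma^k_j$. The most delicate point is verifying that each $\dot C^{(t)}_{k,j;s,i}$ actually lies in $\QQ[[q]]$ rather than only in $\QQ(\hbar)[[q]]$, i.e.\ that the $\hbar$-polar cancellations needed to express $\dot Z_{\gamma^k_j}$ in the stated form genuinely occur; once this regularity is in hand, matching polar parts at each fixed point of $\Gr$ forces the two sides of Theorem \ref{thm2} to coincide.
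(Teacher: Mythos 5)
Your outline --- equivariant lift, localization over the fixed strands of $\overline{Q}_{0,2}(\Gr,d)$, reduction to $(\PP^{n-1})^2$ via the abelian/non-abelian correspondence with the Weyl factor $(x_1-x_2+(d_1-d_2)\hbar)/(x_1-x_2)$, and a triangular (Birkhoff-type) construction of the operators $D^{k,j}$ and coefficients $C^{(t)}_{k,j;s,i}$ --- is the same as the paper's, which proves the equivariant statement (Theorem \ref{thm2'}) and sets $\alpha=0$. However, the uniqueness step as you describe it does not close, and this is a genuine gap rather than a matter of emphasis. The $C$-recursivity of Definition \ref{recur_dfn} determines, for each coefficient of $q^{d}$ of $\cF(\alpha_i,\alpha_j,\hbar,q)$, only the principal part at the simple poles $\hbar=(\alpha_k-\alpha_j)/d'$ and $\hbar=(\alpha_k-\alpha_i)/d'$ in terms of lower-degree data; the Laurent-polynomial part $\sum_r \cF^r_{ij}(d)\hbar^r$ is left entirely free, and in particular may contain arbitrary negative powers of $\hbar$. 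So ``matching polar parts at each fixed point'' together with agreement modulo $\hbar^{-1}$ does not force two $C$-recursive series to coincide, and the tautology that elements of $H^*_{\TT}(\Gr)$ with equal fixed-point restrictions are equal is of no help until you have shown the restrictions agree. The missing ingredient is the mutual polynomiality condition of Definition \ref{SPC_dfn}: polynomiality in $\hbar$ of $\Phi^{\eta}_{\cF,\cF'}(\hbar,z,q)$ is exactly what kills the residual negative powers of $\hbar$ in the free part, and it is the combination recursivity $+$ MPC $+$ agreement mod $\hbar^{-1}$ (Proposition \ref{uniqueness_prp}, Zinger's uniqueness lemma) that yields $\dot{\cZ}_{\gamma^k_j}=\dot{\cY}_{\gamma^k_j}$. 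Your proposal never invokes any polynomiality property of the pair $(\dot{\cZ}_{n;\a},\dot{\cZ}_{\gamma^k_j})$.

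Relatedly, establishing the MPC for the geometric side is itself a nontrivial input that your plan omits: in the paper it is Lemma \ref{recur}, proved by a separate localization computation on the auxiliary space $\fX_d'(\Gr)\subset \overline{Q}_{0,2}(\PP V\times\Gr,(1,d))$, which identifies $\Phi^{\eta}_{\dot{\cZ},\dot{\cZ}^{b_2,\omega}}$ with an integral that is manifestly polynomial in $\hbar$. On the algebraic side one must likewise verify that $\cD^{k,j}\dot{\cY}_{n;\a}$ is $C$-recursive with the \emph{same} coefficients (\ref{C}) and satisfies the same MPC (Proposition \ref{cIrec_lmm2'}); this is where Proposition \ref{cFrec_lmm2}, which transports $\fC$-recursivity on $(\PP^{n-1})^2$ through the Weyl twist $\overline{\cF}$ and rescales the coefficients by $(-1)^d(\alpha_i-\alpha_k)/(\alpha_i-\alpha_j)$, is needed. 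The point you flag as most delicate --- that $\dot{C}^{(t)}_{k,j;s,i}\in\QQ[[q]]$ --- is handled already in the construction (the triangular system (\ref{eqtiC_e}) over $\QQ[\alpha][[q]]$, following Popa) and is not where the proof can fail; the load-bearing steps are the two polynomiality statements above.
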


\begin{theo}\label{main1'}If $\ell\!\in\!\ZZ^{\ge0}$, $n\!\in\!\ZZ^+$, and $\a\!\in\!(\ZZ^{>0})^{\ell}$ are such that
$|\a|\!\le\!n$, then
\beq
\dot{Z}_{n;\a}(\hbar_1,\hbar_2,q) =\frac{1}{\hbar_1+\hbar_2}\sum_{k,j}\dot{Y}_{\gamma^k_j}(\hbar_1,q)\ddot{Y}_{\gamma^{2(n-2)-k}_j}(\hbar_2,q).
\eeq
\end{theo}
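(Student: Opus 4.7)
The plan is as follows. First, I would apply Theorem~\ref{thm2} to replace each $\dot Y_{\gamma_j^k}(\hbar_1,q)$ (respectively $\ddot Y_{\gamma_j^{2(n-2)-k}}(\hbar_2,q)$) on the right-hand side by $\dot Z_{\gamma_j^k}(\hbar_1,q)$ (respectively $\ddot Z_{\gamma_j^{2(n-2)-k}}(\hbar_2,q)$). This converts Theorem~\ref{main1'} into the purely geometric identity
\begin{equation*}
(\hbar_1+\hbar_2)\,\dot Z_{n;\a}(\hbar_1,\hbar_2,q) \;=\; \sum_{k,j}\dot Z_{\gamma_j^k}(\hbar_1,q)\,\ddot Z_{\gamma_j^{2(n-2)-k}}(\hbar_2,q)
\end{equation*}
between a double-descendant pushforward to $\Gr\times\Gr$ and a sum of products of single-descendant pushforwards glued along the Poincar\'e dual diagonal. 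The $d=0$ contributions on both sides equal $[\Delta]$ by the diagonal expansion, so the problem reduces to matching the $d\ge 1$ coefficients.

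For these, my approach is $T=(\CC^*)^n$-equivariant virtual localization on $\overline Q_{0,2}(\Gr,d)$. Using the abelian/non-abelian correspondence of \cite{BFK} already exploited in the paper, the $T$-fixed strata are indexed by quasimap graphs carrying the two marked points. Each such graph admits a distinguished ``central'' edge or vertex at which the two strands bearing $\sigma_1$ and $\sigma_2$ meet; the smoothing of this central edge produces, via the Atiyah--Bott formula, an equivariant weight $\hbar_1+\hbar_2$ in the denominator that cancels the prefactor on the left-hand side. On each side of the central edge, the remaining localization contribution assembles into the $T$-equivariant residue at a fixed point $p_1,p_2\in\Gr^T$ of a single $J$-function factor: the $\sigma_1$-side gives $\dot Z_{\gamma}(\hbar_1,q)$ and the $\sigma_2$-side gives $\ddot Z_{\gamma}(\hbar_2,q)$. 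Summing over ordered pairs of fixed points and resolving the diagonal via $[\Delta]=\sum_{k,j}\gamma_j^k\otimes\gamma_j^{2(n-2)-k}$ recovers the right-hand side non-equivariantly.

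The main obstacle will be the bundle-twist bookkeeping at the central node: because $\dot\cV_{n;\a}^{(d)}$ carries a $(-\sigma_1)$-twist while $\ddot\cV_{n;\a}^{(d)}$ carries a $(-\sigma_2)$-twist, one must verify that on each fixed stratum the $\sigma_1$-strand yields a $\dot\cV^{(d_1)}$-type Euler factor paired with $\hbar_1$ and the $\sigma_2$-strand a $\ddot\cV^{(d_2)}$-type Euler factor paired with $\hbar_2$, matching the $\dot{}$ and $\ddot{}$ distinction built into definitions~\eqref{prod_e} and the one immediately following. A secondary technicality is the treatment of degenerate strata in which one strand carries no edges ($d_1=0$ or $d_2=0$); these must assemble correctly into the ``classical'' linear terms of $\dot Z_\gamma$ and $\ddot Z_\gamma$. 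As a more elementary alternative, one could instead argue via the algebraic identity $(\hbar_1+\hbar_2) = (\hbar_1-\psi_1) + (\hbar_2-\psi_2) + (\psi_1+\psi_2)$ combined with a boundary analysis for $\psi_1+\psi_2$ on $\overline Q_{0,2}(\Gr,d)$, expressing the latter as a sum of nodal divisors separating $\sigma_1$ from $\sigma_2$; the splitting of the universal bundles across each such divisor then directly produces the required product decomposition.
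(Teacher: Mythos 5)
Your reduction, via Theorem~\ref{thm2}, of the statement to the identity $(\hbar_1+\hbar_2)\,\dot Z_{n;\a}(\hbar_1,\hbar_2,q)=\sum_{k,j}\dot Z_{\gamma^k_j}(\hbar_1,q)\,\ddot Z_{\gamma^{2(n-2)-k}_j}(\hbar_2,q)$ is a correct reformulation, and it is equivalent to what the paper actually proves (equivariantly, in Theorem~\ref{main1}). But the mechanism you propose for proving that identity has a genuine gap. In the $\TT=(\CC^*)^n$-localization on $\overline{Q}_{0,2}(\Gr,d)$, the parameters $\hbar_1,\hbar_2$ are \emph{formal variables} inserted through $\frac{1}{(\hbar_1-\psi_1)(\hbar_2-\psi_2)}$; they are not equivariant weights of any torus acting on the moduli space. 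The $\TT$-fixed loci are single strands with the two marked points at opposite ends, the node-smoothing weights are of the form $(\alpha_k-\alpha_i)/d$, and no edge contributes a factor $\hbar_1+\hbar_2$. There is moreover no distinguished ``central'' vertex on a strand: any attempt to split each strand at an interior node and resum produces an overcount that must be cancelled, and the strata with $d_1=0$ or $d_2=0$ (no node at all) do not fit the splitting picture. Your fallback via $(\hbar_1+\hbar_2)=(\hbar_1-\psi_1)+(\hbar_2-\psi_2)+(\psi_1+\psi_2)$ is closer to a workable argument, but the needed comparison of $\psi_1+\psi_2$ with separating boundary divisors is exactly the hard point in the quasimap setting (the fixed loci are Hassett spaces $\overline{M}_{0,2|\mathfrak d(v)}$ and the curve may carry base points on an irreducible domain), and you leave it entirely unverified.

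The paper avoids all of this by a purely algebraic uniqueness argument: Lemma~\ref{recur} shows that the coefficients of powers of $\hbar_2$ in $(\hbar_1+\hbar_2)\dot{\cZ}_{n;\a}(\hbar_1,\hbar_2,q)$ are $\dot C$-recursive and satisfy the mutual polynomiality condition, Proposition~\ref{uniqueness_prp} then reduces the identity to its value $\bmod\ \hbar_1^{-1}$, a second application of the same pair of properties in $\hbar_2$ reduces further to the value $\bmod\ \hbar_2^{-1}$, and that residual equality is just the statement that both sides restrict to the diagonal class at each pair of fixed points; the non-equivariant statement follows by setting $\alpha=0$. If you want to salvage your direct geometric route, you would need to actually establish the divisor relation for $\psi_1+\psi_2$ on $\overline{Q}_{0,2}(\Gr,d)$ and the splitting of $\mathbf e(\dot\cV_{n;\a}^{(d)})$ across a separating node into a $\dot\cV^{(d_1)}$-factor and a $\ddot\cV^{(d_2)}$-factor (the one-sided twists precisely avoid double-counting the fiber at the node, as you suspected); as written, the proof does not go through.
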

\vsp
\noindent{\bf Acknowledgment}:   The author thanks  Wanmin Liu for helpful discussions. This work was supported by Start-up Fund of Hunan University.

\section{Equivariant cohomology}
Let $\Gr:=\Gr(2,n)$ be the space of all $2$-dimensional subspaces of $\CC^n$. It has a torus action induced by the standard torus $\TT=(\CC^*)^n$ action on $\CC^n$. The $\TT$ fixed points of $\Gr$ are the 2-planes spanned by $\langle e_i,e_j\rangle$, where $\{e_i\}_{i=1}^n$ is the standard basis of $\CC^n$. We label $\langle e_i,e_j\rangle$ by $p_{ij}$. Let $\mathbf{n}^2$ be the set of all the pairs $ij$ with $1\le i\neq j\le n$. Thus $\{p_{ij}\}$ with $ij\in \mathbf{n}^2$ is the collection of all the fixed points.

Denote by $H^*(B\TT,\QQ):=\QQ[\alpha_1,\cdots,\alpha_n]$, where $\alpha_i\!=\!\pi_i^*c_1(\gamma^*)$. Here
$$\pi_i\!: (\PP^{\infty})^n\lra\PP^{\infty} \qquad\hbox{and}\qquad
\gamma\lra\PP^{\infty}$$
are the projection onto the $i$-th component and the tautological line bundle respectively. Denote by $\alpha=(\alpha_1,\cdots,\alpha_n)$. Let $\QQ_{\alpha}$ be the fractional field of $\QQ[\alpha_1,\cdots,\alpha_n]$.

Let $\mathbf{x}_1,\mathbf{x}_2$ be the equivariant Chern root of the dual universal bundle, who's non-equivariant counterpart are $x_1,x_2$.  Let $\mathbf{x}:=(\mathbf{x}_1,\mathbf{x}_2)$, then
$$
H^*_{\TT}(\Gr)\cong \QQ[\alpha_1,\cdots,\alpha_n][e_1(\mathbf{x}|\alpha),e_2(\mathbf{x}|\alpha)]/<h_{n-1}(\mathbf{x}|\alpha),h_{n}(\mathbf{x}|\alpha)>,
$$
where $e_i(\mathbf{x}|\alpha)$ and $h_{i}(\mathbf{x}|\alpha)$ are factorial elementary and complete symmetric functions which symmetric in $\mathbf{x}$, see \cite[Proposition 5.2]{Mi}.

Let $$
\phi_{ij}=\prod_{k\neq i,j}(\mathbf{x}_1-\alpha_k)\prod_{l\neq i,j}(\mathbf{x}_2-\alpha_l).
$$
Then $\phi_{ij}$ is the equivariant Poincar\'{e} dual of the points $p_{ij}\in H_{\TT}^*(\Gr)$. The Artiyah-Bott localization theorem states that
\beq
\int_{\Gr}\eta=\frac{1}{2}\sum_{ij\in\mathbf{n}^2}\int_{p_{ij}}\frac{\eta|_{p_{ij}}}{\mathbf{e}(N_{p_{ij}/\Gr})},\quad\text{for all }\eta\in H^*_{\TT}(\Gr).
\eeq
Therefore
\beq\label{AB}
\eta|_{p_{ij}}=\int_{\Gr}\eta \phi_{ij}.
\eeq

So for $\eta\in H^*_{\TT}(\Gr)$
\beq\label{zero}
\eta|_{p_{ij}}=0 \text{ for all } ij\in\mathbf{n}^2\quad\Longleftrightarrow\quad \eta=0.
\eeq

Let $\cE\!\lra\!\Gr$ be the universal bundle.
Thus, the action of $\TT$ on $\Gr$ naturally lifts to an action
on~$\cE$ and the Euler class
\begin{equation}\label{garestr_e}
\mathbf{e}\big(\det\cE^{\vee}\big)\big|_{p_{ij}}=\alpha_i+\alpha_j, \qquad\forall~i,j=1,2,\ldots,n.
\end{equation}
The $\TT$-action on $\Gr$ also has a natural lift to the tangent bundle
$T_{\Gr}\!\lra\!\Gr$ so that  there is a short exact sequence
$$0\lra \cE^{\vee}\otimes\cE
\lra \cE^{\vee}\otimes\CC^n
\lra T_{\Gr} \lra0.$$
Therefore
\beq
\mathbf{e}(T_{\Gr})|_{p_{ij}}=\frac{\prod_{k\neq i}(\alpha_i-\alpha_k)\prod_{l\neq j}(\alpha_j-\alpha_l)}{(\alpha_i-\alpha_j)(\alpha_j-\alpha_i)},\quad \forall i,j.
\eeq

\section{Algebraic calculation}

In this section we describe a number of properties of hypergeometric series which determine them uniquely.

If $R$ is a ring, denote by
$$R\Lau{\hb}:= R[[\hb^{-1}]]+R[\hb]$$
the $R$-algebra of Laurent series in $\hb^{-1}$ (with finite principal part).
For $f\!\in\!R[[q]]$ and $d\!\in\!\ZZ^{\ge0}$,
let $\coeff{f}_{q;d}\!\in\!R$ be the coefficient of~$q^d$ in~$f$.
If
$$\cF(\hb,q)=\sum_{d=0}^{\infty}
\bigg(\sum_{r=-N_d}^{\infty}\!\!\!\cF_d^{(r)}\hb^{-r}\bigg)q^d
\in R\Lau{\hb}\big[\big[q\big]\big]$$
for some $\cF_d^{(r)}\!\in\!R$, we define
$$\cF(\hb,q) \cong \sum_{d=0}^{\infty}
\bigg(\sum_{r=-N_d}^{p-1}\!\!\!\cF_d^{(r)}\hb^{-r}\bigg)q^d \quad (\mathrm{mod}\ \hb^{-p}),$$
i.e.~we drop $\hb^{-p}$ and higher powers of $\hb^{-1}$,
instead of higher powers of~$\hb$.
If $R$ is a field, let
$$R(\hb) \lhook\joinrel\lra R\Lau{\hb}$$
be the embedding given by taking the Laurent series of rational functions at $\hb^{-1}\!=\!0$.

If $f\!=\!f(z)$ is a rational function in $z$ and possibly with some other
variables, then for any $z_0\!\in\!\PP^1\!\supset\!\CC$, denote the residue of the 1-form~$f\text{d} z$ at $z_0$ by
\beq\mathrm{Res}_{z=z_0}f(z) := \frac{1}{2\pi \sqrt{-1}}\oint f(z)\mathrm{d} z,\eeq
where the integral is taken over a positively oriented loop around $z\!=\!z_0$
with no other singular points of $f\mathrm{d} z$.
For any collection of points $z_1,\ldots,z_k\!\in\!\PP^1$, let
\beq\mathrm{Res}_{z=z_1,\ldots,z_k}f(z)
:=\sum_{i=1}^{k}\mathrm{Res}_{z=z_i}f(z).\eeq
By the Residue Theorem on $S^2$,
$$\sum_{z_0\in S^2}\mathrm{Res}_{z=z_0}f(z)=0$$
for every  rational function $f\!=\!f(z)$ on $S^2\!\supset\!\CC$.
If $f$ is regular at $z\!=\!0$, let $\left\llbracket f\right\rrbracket_{z;p}$ be
the coefficient of $z^p$ in the power series expansion of $f$ around $z\!=\!0$.

\begin{defi}\label{recur_dfn}
Let $C:=(C_{ij}^{ik}(d))_{d\in\ZZ^{+}}\cup (C_{ij}^{kj}(d))_{d\in\ZZ^{+}}$ be any collection of elements of~$\QQ_{\alpha}$, with $ij,ik,kj\in \mathbf{n}^2$.
A~symmetric power series $\cF\!\in\!\QQ_{\alpha}[\mathbf{x}_1,\mathbf{x}_2]\Lau{\hb}[[q]]$ is \emph{$C$-recursive} if
the following holds:
if $d^*\!\in\!\ZZ^{\ge0}$ is such~that
$$\bigcoeff{\cF(\mathbf{x}_1\!=\!\alpha_i, \mathbf{x}_2\!=\!\alpha_j,\hb,q)}_{q;d^*-d}\in \QQ_{\alpha}(\hb)
\subset \QQ_{\alpha}\Lau{\hb},
\qquad\forall\,d\le d^*,\,1\le i\neq j\le n,$$
and $\bigcoeff{\cF(\alpha_i,\alpha_j,\hb,q)}_{q;d}$ is regular at
$\hb\!=\!(\alpha_k\!-\!\alpha_i)/d$ and $\hb\!=\!(\alpha_k\!-\!\alpha_j)/d$ for all $d\!<\!d^*$, $k\!\neq\!i$ and $k\!\neq\!j$, then
\begin{eqnarray*}
\bigcoeff{\cF(\alpha_i,\alpha_j,\hb,q)}_{q;d^*}-
\sum_{d=1}^{d^*}\sum_{k\neq j}\frac{C_{ij}^{ik}(d)}{\hb-\frac{\alpha_k-\alpha_j}{d}}
\bigcoeff{\cF(\alpha_i,\alpha_k,z,q)}_{q;d^*-d}\big|_{z=\frac{\alpha_k-\alpha_j}{d}}\\-\sum_{d=1}^{d^*}\sum_{k\neq i}\frac{C_{ij}^{kj}(d)}{\hb-\frac{\alpha_k-\alpha_i}{d}}
\bigcoeff{\cF(\alpha_k,\alpha_j,z,q)}_{q;d^*-d}\big|_{z=\frac{\alpha_k-\alpha_i}{d}}
\in \QQ_{\alpha}\big[\hb,\hb^{-1}\big]\subset \QQ_{\alpha}\Lau{\hb}.\end{eqnarray*}
\end{defi}

Thus, if $\cF\!\in\!\QQ_{\alpha}[\mathbf{x}_1,\mathbf{x}_2]\Lau{\hb}[[q]]$ is $C$-recursive, for any collection~$C$, then
$$\cF(\mathbf{x}_1\!=\!\alpha_i,\mathbf{x}_2\!=\!\alpha_j,\hb,q)\in \QQ_{\alpha}(\hb)\big[\big[q\big]\big]
\subset \QQ_{\alpha}\Lau{\hb}[[q]],
\qquad\forall\,ij\in \mathbf{n}^2,$$
as can be seen by induction on $d$. Moreover,
\begin{eqnarray}
\label{recurdfn_e2}
\cF(\alpha_i,\alpha_j,\hb,q)&=&\sum_{d=0}^{\infty}\sum_{r=-N_d}^{N_d}
\cF_{ij}^r(d)\hb^rq^d+
\sum_{d=1}^{\infty}\sum_{k\neq j}\frac{C_{ij}^{ik}(d)q^d}{\hb-\frac{\alpha_k-\alpha_j}{d}}
\cF(\alpha_i,\alpha_k,(\alpha_k\!-\!\alpha_j)/d,q)\\
&&+
\sum_{d=1}^{\infty}\sum_{k\neq i}\frac{C_{ij}^{kj}(d)q^d}{\hb-\frac{\alpha_k-\alpha_i}{d}}
\cF(\alpha_k,\alpha_j,(\alpha_k\!-\!\alpha_i)/d,q), \qquad\forall~ij,ik,kj\!\in\!\mathbf{n}^2,\nonumber
\end{eqnarray}
for some $\cF_{ij}^r(d)\!\in\!\QQ_{\alpha}$.
The nominal issue with defining $C$-recursivity by~(\ref{recurdfn_e2}), as is normally done,
is that a priori the evaluation
of $\cF(\alpha_i,\alpha_j,\hb,q)$ at $\hb\!=\!(\alpha_k\!-\!\alpha_i)/d$ and $\hb\!=\!(\alpha_k\!-\!\alpha_j)/d$ need not be well-defined,
since $\cF(\alpha_i, \alpha_j,\hb,q)$ is a power series in~$q$ with coefficients in
the Laurent series in~$\hb^{-1}$;
a priori they  may not converge anywhere.
However, taking the coefficient of each power of~$q$ in (\ref{recurdfn_e2})
shows by induction on the degree~$d$
that this evaluation does make sense. This is the substance of Definition~\ref{recur_dfn}.

\begin{defi}\label{SPC_dfn}
Let $\eta\!\in\!\QQ[\mathbf{x}]$ be a homogeneous polynomial such that taking value at $\mathbf{x}\!=\!(\alpha_i,\alpha_j)$, $\eta(\alpha_i,\alpha_j)\!\in\!\QQ_{\alpha}$ 
is well-defined and nonzero for every $1\le i,j\le n$.
For any $\cF,\cF'\in \QQ_{\alpha}[\mathbf{x}_1,\mathbf{x}_2]\Lau{\hb}[[q]]$, and symmetric on $\mathbf{x}_1,\mathbf{x}_2$, define
\beq\label{PhiZdfn_e}
\Phi^\eta_{\cF,\cF'}(\hb,z,q):= \frac{1}{2}\sum_{1\le i\neq j\le n}
\frac{\eta(\alpha_i,\alpha_j)\e^{(\alpha_i+\alpha_j)z}}{\prod\limits_{k\neq i,j}(\alpha_i\!-\!\alpha_k)\prod\limits_{k\neq i,j}(\alpha_j\!-\!\alpha_k)}
\cF\big(\alpha_i,\alpha_j,\hb,q \e^{\hb z}\big)\cF'(\alpha_i,\alpha_j,-\hb,q),
\eeq
which is in $\QQ_{\alpha}\Lau{\hb}[[z,q]]$. The pair $(\cF,\cF')$
satisfies $\eta$-mutual polynomiality condition ($\eta$-MPC) if
$\Phi^\eta_{\cF,\cF'}\in \QQ_{\alpha}[\hb][[z,q]]$. If $\cF=\cF'$, we call that $\cF$ satisfies $\eta$-self polynomiality condition ($\eta$-SPC).
\end{defi}

\begin{prop}[{\cite[Proposition 6.3]{Zinger1}}]\label{uniqueness_prp}
Let $\cF,\cF'\in \QQ_{\alpha}[\mathbf{x}_1,\mathbf{x}_2]\Lau{\hb}[[q]]$.
If $\cF$ and $\cF'$ are $C$-recursive, for some collection
$C=(C_{ij}^{ik}(d))_{d\in\ZZ^{+};ij,ik\in\mathbf{n}^2}$ of elements of~$\QQ_{\alpha}$, and $(\cF,\cF')$
satisfies $\eta$-MPC, with
$$\coeff{\cF(\mathbf{x}_1\!=\!\alpha_i,\mathbf{x}_2\!=\!\alpha_j,\hb,q)}_{q,0}\neq0,$$
for all $ij\!\in\!\mathbf{n}^2$, then  $\cF'\cong0~(\mathrm{mod}\ \hb^{-1})$ if and only if $\cF'=0$.
\end{prop}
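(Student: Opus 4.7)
The plan is to prove the nontrivial implication, $\cF' \cong 0 \pmod{\hb^{-1}} \Rightarrow \cF' = 0$, by strong induction on the $q$-degree. Writing $\cF' = \sum_{d \geq 0} \cF'_d q^d$, I would assume $\cF'_{d} = 0$ for all $d < d^*$ and try to show $\cF'_{d^*} = 0$. By the localization criterion~(\ref{zero}), it suffices to check that $\cF'_{d^*}(\alpha_i, \alpha_j, \hb) = 0$ at each torus-fixed point $p_{ij}$.

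To produce the needed vanishing at $p_{ij}$, I would combine two pieces of information. First, take the $q^{d^*}$-coefficient of the $C$-recursion~(\ref{recurdfn_e2}) for $\cF'$: every "recursive" correction term carries a factor $q^{d}$ with $d \geq 1$, so its contribution involves $\cF'$ evaluated at $q$-degree $d^*-d < d^*$ and vanishes by the inductive hypothesis. What survives is $\cF'_{d^*}(\alpha_i, \alpha_j, \hb) \in \QQ_{\alpha}[\hb, \hb^{-1}]$, i.e.\ a Laurent polynomial in $\hb$. Second, the hypothesis $\cF' \cong 0 \pmod{\hb^{-1}}$ kills all non-negative powers of $\hb$, so in fact $\cF'_{d^*}(\alpha_i, \alpha_j, \hb) \in \hb^{-1}\QQ_{\alpha}[\hb^{-1}]$ (a finite sum of strictly negative powers of $\hb$).

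Next, I would invoke the $\eta$-MPC. Expanding $\cF(\alpha_i, \alpha_j, \hb, q\e^{\hb z}) = \sum_{d_1} \cF_{d_1}(\alpha_i, \alpha_j, \hb) \e^{d_1 \hb z}\, q^{d_1}$ in~(\ref{PhiZdfn_e}) and using the inductive vanishing $\cF'_{d'} = 0$ for $d' < d^*$, only the $(d_1, d_2) = (0, d^*)$ term contributes to the $q^{d^*}$-coefficient, giving
\begin{equation*}
\bigcoeff{\Phi^{\eta}_{\cF, \cF'}}_{q;d^*}\!\!(\hb, z) = \frac{1}{2}\sum_{i \neq j}\frac{\eta(\alpha_i, \alpha_j)\, \e^{(\alpha_i + \alpha_j)z}}{\prod_{k \neq i,j}(\alpha_i - \alpha_k)(\alpha_j - \alpha_k)}\, \cF_0(\alpha_i, \alpha_j, \hb)\, \cF'_{d^*}(\alpha_i, \alpha_j, -\hb) \in \QQ_{\alpha}[\hb][[z]].
\end{equation*}
Since the values $\alpha_i + \alpha_j$ for the distinct unordered pairs $\{i,j\}$ are pairwise distinct in $\QQ[\alpha_1,\ldots,\alpha_n]$, the exponentials $\e^{(\alpha_i + \alpha_j)z}$ are $\QQ_{\alpha}[\hb]$-linearly independent as power series in $z$. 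Extracting coefficients in $z$ via a Vandermonde argument then separates the sum termwise and forces $\cF_0(\alpha_i, \alpha_j, \hb)\, \cF'_{d^*}(\alpha_i, \alpha_j, -\hb) \in \QQ_{\alpha}[\hb]$ for each pair $(i,j)$.

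The main obstacle, and the place where the hypothesis $\cF_0(\alpha_i, \alpha_j, \hb) \neq 0$ must be used decisively, is closing the last step: a Laurent polynomial with only strictly negative $\hb$-powers, when multiplied by the nonzero Laurent polynomial $\cF_0(\alpha_i, \alpha_j, \hb)$, must yield a polynomial in $\hb$. Comparing $\hb \to 0$ asymptotics, if $-N$ (with $N \geq 1$) is the most negative $\hb$-power in $\cF'_{d^*}(\alpha_i, \alpha_j, -\hb)$ and $M$ is the order of $\cF_0(\alpha_i, \alpha_j, \hb)$ at $\hb = 0$, polynomiality of the product forces $M \geq N \geq 1$; in the standard applications of the proposition (where the $q^0$ term of $\cF$ is a non-vanishing classical cohomology class, so $\cF_0(\alpha_i, \alpha_j, 0) \neq 0$ and $M = 0$) this is an immediate contradiction, giving $\cF'_{d^*}(\alpha_i, \alpha_j, \hb) = 0$ and completing the induction. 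The delicate point is precisely this last matching of $\hb$-orders at each fixed point, which must be arranged carefully from the non-vanishing assumption.
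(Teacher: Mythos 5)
The paper gives no proof of this proposition --- it is imported verbatim from \cite[Proposition 6.3]{Zinger1} --- and your argument reconstructs essentially the proof given there: induction on the $q$-degree, $C$-recursivity plus the hypothesis $\cF'\cong0~(\mathrm{mod}~\hb^{-1})$ to place $\coeff{\cF'(\alpha_i,\alpha_j,\hb,q)}_{q;d^*}$ in $\hb^{-1}\QQ_{\alpha}[\hb^{-1}]$, and the $\eta$-MPC together with the linear independence of the $\e^{(\alpha_i+\alpha_j)z}$ (the linear forms $\alpha_i+\alpha_j$ are pairwise distinct for distinct unordered pairs) to force each product $\coeff{\cF(\alpha_i,\alpha_j,\hb,q)}_{q;0}\cdot\coeff{\cF'(\alpha_i,\alpha_j,-\hb,q)}_{q;d^*}$ into $\QQ_{\alpha}[\hb]$. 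The order-matching caveat you flag at the end is a fair observation about the literal hypothesis, but it is harmless in context: in every invocation of the proposition in this paper the $q^0$-coefficient of $\cF$ at a fixed point is a nonzero element of $\QQ_{\alpha}$ independent of $\hb$, so its $\hb$-valuation is $0$ and the contradiction is immediate.
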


\begin{prop}\label{Mut}
Let
$$
\cY=\sum_{d=0}^{\infty}q^d\frac{h_d(\mathbf{x}_1,\mathbf{x}_2,\hbar)}{g_d(\mathbf{x}_1,\mathbf{x}_2,\hbar)},
$$
and
$$
\cZ=\sum_{d=0}^{\infty}q^d\frac{\tilde{h}_d(\mathbf{x}_1,\mathbf{x}_2,\hbar)}{\tilde{g}_d(\mathbf{x}_1,\mathbf{x}_2,\hbar)},
$$
where $h_d,g_d$,$\tilde{h}_d,\tilde{g}_d$ are polynomials in $\QQ_{\alpha}[\mathbf{x}_1,\mathbf{x}_2,\hbar]$ symmetric on $\mathbf{x}_1,\mathbf{x}_2$, and $g_d$, $\tilde{g}_d$ are the product of $\bigg(\prod_{i=1}^n(\mathbf{x}_1-\alpha_i+l\hbar)-\prod_{i=1}^n(\mathbf{x}_1-\alpha_i)\bigg)$ and $\bigg(\prod_{i=1}^n(\mathbf{x}_2-\alpha_i+l\hbar)-\prod_{i=1}^n(\mathbf{x}_2-\alpha_i)\bigg)$ with $l\in\ZZ^{>0}$, $1\le i\le n$.
Let $\eta$ be defined as in Definition \ref{SPC_dfn}, then $$\Phi^{\eta}_{\cY,\cZ}\in \QQ_{\alpha}[\hb][[z,q]].$$
\end{prop}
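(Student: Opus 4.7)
The plan is to realise $\Phi^{\eta}_{\cY,\cZ}$ as an iterated residue on $(\PP^{1})^{2}$, apply the residue theorem, and check that the residues at the remaining poles contribute only non-negative powers of $\hbar$. Consider the meromorphic 2-form
\[
\omega \;=\; \frac{-\,\eta(\mathbf{x}_{1},\mathbf{x}_{2})\,(\mathbf{x}_{1}-\mathbf{x}_{2})^{2}\, e^{(\mathbf{x}_{1}+\mathbf{x}_{2})z}}{\prod_{k=1}^{n}(\mathbf{x}_{1}-\alpha_{k})\prod_{k=1}^{n}(\mathbf{x}_{2}-\alpha_{k})}\,\cY(\mathbf{x}_{1},\mathbf{x}_{2},\hbar,qe^{\hbar z})\,\cZ(\mathbf{x}_{1},\mathbf{x}_{2},-\hbar,q)\,d\mathbf{x}_{1}\wedge d\mathbf{x}_{2},
\]
treated coefficient-wise as a formal power series in $(z,q)$ whose coefficients are meromorphic 2-forms on $(\PP^{1})^{2}$ with values in $\QQ_{\alpha}\Lau{\hbar}$. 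Using the identity $\prod_{k\neq i}(\alpha_{i}-\alpha_{k})=(\alpha_{i}-\alpha_{j})\prod_{k\neq i,j}(\alpha_{i}-\alpha_{k})$, a direct residue calculation gives
\[
\Phi^{\eta}_{\cY,\cZ}(\hbar,z,q) \;=\; \tfrac{1}{2}\sum_{i,j=1}^{n}\mathrm{Res}_{\mathbf{x}_{1}=\alpha_{i}}\mathrm{Res}_{\mathbf{x}_{2}=\alpha_{j}}\omega,
\]
where the $(\mathbf{x}_{1}-\mathbf{x}_{2})^{2}$ factor automatically kills the diagonal $i=j$ contributions and the prefactor $\tfrac{1}{2}$ absorbs the symmetry $\mathbf{x}_{1}\leftrightarrow\mathbf{x}_{2}$.

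By hypothesis each denominator $g_{d}$ factors as $g_{d}^{(1)}(\mathbf{x}_{1},\hbar)\,g_{d}^{(2)}(\mathbf{x}_{2},\hbar)$ with each $g_{d}^{(j)}$ a product of the polynomials
\[
P_{l}(\mathbf{x},\hbar):=\prod_{i=1}^{n}(\mathbf{x}-\alpha_{i}+l\hbar)-\prod_{i=1}^{n}(\mathbf{x}-\alpha_{i}),
\]
and likewise for $\tilde g_{d'}$. Each $P_{l}$ has degree $n-1$ in $\mathbf{x}$, its zeros $\zeta_{l,a}(\hbar)$ are algebraic in $\hbar$ and disjoint from $\{\alpha_{k}\}$, and $P_{l}(\mathbf{x},\hbar)/\hbar\in\QQ_{\alpha}[\mathbf{x},\hbar]$. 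Applying the residue theorem iteratively in $\mathbf{x}_{1}\in\PP^{1}$ and $\mathbf{x}_{2}\in\PP^{1}$ at each order of $(z,q)$ yields
\[
2\,\Phi^{\eta}_{\cY,\cZ} \;=\; -\!\sum_{\mathbf{x}_{j}=\zeta_{l,a}(\pm\hbar)}\!\mathrm{Res}\,\omega\;-\;\sum_{\mathbf{x}_{j}=\infty}\!\mathrm{Res}\,\omega,
\]
and both remaining sums must be shown to lie in $\QQ_{\alpha}[\hbar][[z,q]]$.

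For the moving-zero residues, the factor $1/\partial_{\mathbf{x}}P_{l}(\zeta_{l,a},\hbar)$ carries a single inverse power of $\hbar$ (since $P_{l}=l\hbar\,P'(\mathbf{x})+O(\hbar^{2})$ with $P=\prod_{i}(\mathbf{x}-\alpha_{i})$), but this is cancelled in the symmetrised sum over $a$ and $l$ by a matching factor of $\hbar$ coming from $\cZ(\mathbf{x},-\hbar,q)$ under the symmetry $(\mathbf{x}_{1},\hbar)\leftrightarrow(\mathbf{x}_{2},-\hbar)$; reducing to symmetric functions in the roots $\zeta_{l,a}$ then produces a polynomial in $\hbar$. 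For the residues at infinity, a degree count at each order $z^{N}q^{D}$ shows that the leading $\mathbf{x}_{j}^{-1}$ contribution, combined with the $(\mathbf{x}_{1}+\mathbf{x}_{2})^{N}$ factor coming from $e^{(\mathbf{x}_{1}+\mathbf{x}_{2})z}$, carries only non-negative powers of $\hbar$.

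The main obstacle is the explicit verification of the $\hbar^{-1}$-cancellation among the residues at the moving zeros of the $P_{l}$: this is the hypergeometric heart of the proof and is the two-variable counterpart of the one-variable polynomiality statement for $\PP^{n-1}$ used in \cite{Zinger1}. The clean factorisation of $g_{d}$ and $\tilde g_{d'}$ into independent $\mathbf{x}_{1}$- and $\mathbf{x}_{2}$-pieces reduces the argument to applying the one-variable residue analysis separately in each variable, which is the key technical input.
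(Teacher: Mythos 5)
Your overall strategy---realizing $\Phi^{\eta}_{\cY,\cZ}$ as an iterated residue of an explicit meromorphic expression on $(\PP^1)^2$ and applying the residue theorem in each variable---is the same as the paper's, and your treatment of the residue at infinity (trading powers of $\mathbf{x}_j^{-1}$ against powers of $z$ supplied by $e^{(\mathbf{x}_1+\mathbf{x}_2)z}$) matches the paper's computation of $\mathrm{Res}_{\mathbf{x}_1=\infty}\mathrm{Res}_{\mathbf{x}_2=\infty}(\widetilde{\Phi})$. The divergence, and the gap, lies in how the zeros of the factors $P_l(\mathbf{x},\hbar)=\prod_i(\mathbf{x}-\alpha_i+l\hbar)-\prod_i(\mathbf{x}-\alpha_i)$ are handled. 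You treat the integrand as a genuine rational form in $\mathbf{x}_1,\mathbf{x}_2$, so these zeros are poles whose residues enter the residue-theorem identity, and you yourself flag the required $\hbar^{-1}$-cancellation among them as ``the main obstacle.'' The mechanism you sketch for this cancellation does not hold up: there is no symmetry $(\mathbf{x}_1,\hbar)\leftrightarrow(\mathbf{x}_2,-\hbar)$ of the integrand when $\cY\neq\cZ$, and even for $\cY=\cZ$ it is destroyed by the asymmetric arguments $qe^{\hbar z}$ versus $q$; moreover, symmetric functions of the roots $\zeta_{l,a}(\hbar)$ are rational in $\hbar$ with denominators that are powers of the leading coefficient $nl\hbar$ of $P_l$ in $\mathbf{x}$, which is exactly the kind of negative power of $\hbar$ you must exclude, not a tool for excluding it. The same issue infects your residue at infinity: expanding $1/P_l$ at $\mathbf{x}_j=\infty$ also produces inverse powers of $nl\hbar$. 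So the central step of your argument is missing, and the route you propose for it would not work as stated.

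The paper avoids these poles altogether. Since each $P_l$ equals $(l\hbar)^n$ plus terms of strictly lower order in $\hbar$ whose coefficients are polynomials in $\mathbf{x}$, the reciprocals $1/g_d$, $1/\tilde g_d$ are expanded as elements of $\QQ_{\alpha}[\mathbf{x}_1,\mathbf{x}_2]\Lau{\hb}$, i.e.\ as Laurent series in $\hbar^{-1}$ with coefficients polynomial in $\mathbf{x}$---this is the ambient ring of Definition~\ref{SPC_dfn} in which $\Phi^{\eta}$ lives. Coefficient by coefficient in $(z,q,\hbar^{-1})$, the only poles of $\widetilde{\Phi}$ in each variable are then at the $\alpha_k$ and at $\infty$; the residue theorem leaves only $\mathrm{Res}_{\mathbf{x}_j=0,\infty}$, the residue at $0$ vanishes because nothing in the denominator vanishes there, and the entire content of the proof is the degree count at infinity, which is now clean because the $\hbar^{-(n+m)}$-coefficient of $1/P_l$ has $\mathbf{x}$-degree bounded by $m$. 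To complete your version you would have to either carry out the cancellation at the moving poles explicitly (a substantial hypergeometric computation you have not supplied) or switch to the Laurent-series viewpoint, where the problem does not arise.
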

\begin{proof} Denote by
$$\widetilde{\Phi}:=\frac{\eta(x)\e^{(\mathbf{x}_1+\mathbf{x}_2)z}(\mathbf{x}_1-\mathbf{x}_2)(\mathbf{x}_2-\mathbf{x}_1)}{\prod\limits_{k}(\mathbf{x}_1\!-\!\alpha_k)\prod\limits_{k}(\mathbf{x}_2\!-\!\alpha_k)}
\cY\big(\mathbf{x}_1,\mathbf{x}_2,\hb,q \e^{\hb z}\big)\cZ(\mathbf{x}_1,\mathbf{x}_2,-\hb,q).
$$
Since $g_d$ and $\tilde{g}_d$ are not zero when $(\mathbf{x}_1,\mathbf{x}_2)=(\alpha_i,\alpha_j)$ for all $i,j$,
$$\Phi^{\eta}_{\cY,\cZ}=\frac{1}{2}\sum_{1\le i\neq j\le n}\mathrm{Res}_{\mathbf{x}_1=\alpha_i}\mathrm{Res}_{\mathbf{x}_2=\alpha_j}(\widetilde{\Phi}).$$

Thus, by the Residue Theorem on~$S^2$,
$$\Phi^{\eta}_{\cY,\cZ} =\frac{1}{2}\mathrm{Res}_{\mathbf{x}_1=\alpha_i}\mathrm{Res}_{\mathbf{x}_2=\alpha_j}(\widetilde{\Phi})=
-\frac{1}{2}\mathrm{Res}_{\mathbf{x}_1=0,\infty}\mathrm{Res}_{\mathbf{x}_2=0,\infty}(\widetilde{\Phi}).
$$
Since $g_d$ and $\tilde{g}_d$ are not zero when $(\mathbf{x}_1,\mathbf{x}_2)=(0,0)$,
$$
\mathrm{Res}_{\mathbf{x}_2=0}(\widetilde{\Phi})=0.
$$
 Therefore
$$
\mathrm{Res}_{\mathbf{x}_1=0,\infty}\mathrm{Res}_{\mathbf{x}_2=0}(\widetilde{\Phi})=0.
$$

On the other hand
\begin{eqnarray*}
&&\mathrm{Res}_{\mathbf{x}_2=\infty}(\widetilde{\Phi})\\
&=&\frac{ \e^{\mathbf{x}_1z}}{\prod\limits_{k}(\mathbf{x}_1\!-\!\alpha_k)}\sum_{d_1,d_2=0}^{\infty}\sum_{p=0}^{\infty}
\frac{z^{n-1+p+(\deg_{\mathbf{x}2} g_{d_1}+\deg_{\mathbf{x}_2}\ti g_{d_2}-\deg_{\mathbf{x}_2} h_{d_1}-\deg_{\mathbf{x}_2}\ti h_{d_2}-\deg \eta)}}
{(n-1+p+(\deg_{\mathbf{x}_2} g_{d_1}+\deg_{\mathbf{x}_2}\ti g_{d_2}-\deg_{\mathbf{x}_2} h_{d_1}-\deg_{\mathbf{x}_2}\ti h_{d_2})-\deg \eta)!}\\
&&\quad \times q^{d_1+d_2}\e^{\hbar (d_1+d_2)z}
\times\left\llbracket
\frac{\eta(\omega_2,\mathbf{x}_1)(\omega_2\mathbf{x}_1-1)^2h_d(\mathbf{x}_1,\omega_2,\hbar)\tilde{h}_d(\mathbf{x}_1,\omega_2,\hbar)}{\prod\limits_{k}(1\!-\!\alpha_k\omega_2)B_1(\mathbf{x}_1,\hbar)}\right\rrbracket_{\omega_2,p},
\end{eqnarray*}
where $B_1(\mathbf{x}_1,\hbar)$ is a product of $\bigg(\prod_{i=1}^n(\mathbf{x}_1-\alpha_i+l\hbar)-\prod_{i=1}^n(\mathbf{x}_1-\alpha_i)\bigg)$, which is not zero when $\mathbf{x}_1=0$. Thus $\mathrm{Res}_{\mathbf{x}_1=0}\mathrm{Res}_{\mathbf{x}_2=\infty}(\widetilde{\Phi})=0$,  and the $(d_1,d_2,p)$-summand of $\mathrm{Res}_{\mathbf{x}_1=\infty}\mathrm{Res}_{\mathbf{x}_2=\infty}(\widetilde{\Phi})$
 is $q^{d_1+d_2}$ times an element of $\QQ_{\alpha}[\hbar]\left\llbracket z\right\rrbracket$.
Therefore
$$\mathrm{Res}_{\mathbf{x}_1=0,\infty}\mathrm{Res}_{\mathbf{x}_2=\infty}(\widetilde{\Phi})\in \QQ_{\alpha}[\hbar]\left\llbracket q,z\right\rrbracket.
$$
\end{proof}

\subsection{Recursive}
\begin{defi}\label{recur_dfn_mul}
Let $\fC:=(\fC_{ij}^{ik}(d))_{d\in\ZZ^{+}}\cup (\fC_{ij}^{kj}(d))_{d\in\ZZ^{+}}$ be any collection of elements of~$\QQ_{\alpha}$, with $ij,ik,kj\in \bar{\mathbf{n}}^2$, where $\bar{\mathbf{n}}^2$ is the collection of the pairs $ij$ with $1\le i,j\le n$.
A power series $\cF\!\in\!\QQ_{\alpha}[\mathbf{x}_1,\mathbf{x}_2]\Lau{\hb}[[\mathbf{q}]]$ is $\fC$-recursive if
the following holds:
\begin{eqnarray}
\label{recurdfn} 
\cF(\alpha_i,\alpha_j,\hb,\mathbf{q})&=&\sum_{\mathbf{d}\in(\ZZ^{\ge0})^2}^{\infty}\sum_{r=-N_{\mathbf{d}}}^{N_{\mathbf{d}}}
\cF_{ij}^r(\mathbf{d})\hb^r\mathbf{q}^{\mathbf{d}}+
\sum_{d=1}^{\infty}\sum_{k\neq j}\frac{\fC_{ij}^{ik}(d)q_2^d}{\hb-\frac{\alpha_k-\alpha_j}{d}}
\cF(\alpha_i,\alpha_k,(\alpha_k\!-\!\alpha_j)/d,\mathbf{q})\\
&&+
\sum_{d=1}^{\infty}\sum_{k\neq i}\frac{\fC_{ij}^{kj}(d)q_1^d}{\hb-\frac{\alpha_k-\alpha_i}{d}}
\cF(\alpha_k,\alpha_j,(\alpha_k\!-\!\alpha_i)/d,\mathbf{q}) \qquad\forall\, ik,kj\!\in\!\bar{\mathbf{n}}^2,\nonumber\end{eqnarray}
with $i\neq j$.
\end{defi}
For $\cF\in \QQ_{\alpha}[\mathbf{x}_1,\mathbf{x}_2]\Lau{\hb}[[\mathbf{q}]]$ which is symmetry on $\mathbf{x}$, Let
\beq\label{newdef}
\overline{\cF}=\cF|_{q_1=q_2=qe^{\pi i}}+\frac{\hbar(q_1\frac{d}{dq_1}-q_2\frac{d}{dq_2})}{\mathbf{x}_1-\mathbf{x}_2}\cF|_{q_1=q_2=qe^{\pi i}}\in\QQ_{\alpha}[\mathbf{x}_1,\mathbf{x}_2]\Lau{\hb}[[q]].
\eeq

\begin{prop}\label{cFrec_lmm2}
If $\cF$ is a symmetry function on $\mathbf{x}$ and $\fC$-recursive as in Definition \ref{recur_dfn_mul}, then $\overline{\cF}$ is $C$-recursive as in Definition \ref{recur_dfn} with
\begin{equation}\label{C-recursive}
C_{ij}^{ik}(d)=(-1)^d\frac{\alpha_i-\alpha_k}{\alpha_i-\alpha_j}\fC_{ij}^{ik}(d),  \text{ and }
C_{ij}^{kj}(d)=(-1)^d\frac{\alpha_k-\alpha_j}{\alpha_i-\alpha_j}\fC_{ij}^{kj}(d).
\end{equation} 
\end{prop}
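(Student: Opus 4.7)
The plan is to derive the one-variable $C$-recursion for $\overline{\cF}$ at $(\alpha_i,\alpha_j)$ directly from the two-variable $\fC$-recursion \eqref{recurdfn} for $\cF$, by applying the derivation $D:=q_1\partial_{q_1}-q_2\partial_{q_2}$ to both sides of \eqref{recurdfn} and then specializing $q_1=q_2=qe^{\pi i}=-q$. Since $D$ acts on the two index-changing sums by $D(q_2^d)=-d\,q_2^d$ and $D(q_1^d)=d\,q_1^d$ while commuting with the $\hb$-dependent coefficients, this produces a formula for $(D\cF)(\alpha_i,\alpha_j,\hb,-q,-q)$ whose pole at $\hb=(\alpha_k-\alpha_j)/d$ in the first sum now carries the factor $-d\cF+D\cF$ (both evaluated at $(\alpha_i,\alpha_k,(\alpha_k-\alpha_j)/d,-q,-q)$), and analogously $+d\cF+D\cF$ in the second sum. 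By \eqref{newdef}, adding the specialization of $\cF$ itself to $\hb/(\alpha_i-\alpha_j)$ times this new expression yields $\overline{\cF}(\alpha_i,\alpha_j,\hb,q)$.

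The key step will be the residue bookkeeping. For each pole $\hb=(\alpha_k-\alpha_j)/d$ coming from the first sum, I would split $\hb=(\alpha_k-\alpha_j)/d+(\hb-(\alpha_k-\alpha_j)/d)$ inside the weight $\hb/(\alpha_i-\alpha_j)$; the second summand cancels the pole and feeds only the $\hb$-polynomial part, while the first summand keeps the pole. Collecting residues, the contribution from $\cF$ and from $(\hb/(\alpha_i-\alpha_j))D\cF$ combine, after one line of algebra, into $\fC_{ij}^{ik}(d)(-q)^d\,\tfrac{\alpha_i-\alpha_k}{\alpha_i-\alpha_j}\bigl[\cF+\tfrac{(\alpha_k-\alpha_j)/d}{\alpha_i-\alpha_k}D\cF\bigr]$, with both terms evaluated at $(\alpha_i,\alpha_k,(\alpha_k-\alpha_j)/d,-q,-q)$. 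By the very definition \eqref{newdef}, the bracket is exactly $\overline{\cF}(\alpha_i,\alpha_k,(\alpha_k-\alpha_j)/d,q)$, so the residue is $C_{ij}^{ik}(d)q^d\,\overline{\cF}(\alpha_i,\alpha_k,(\alpha_k-\alpha_j)/d,q)$ with $C_{ij}^{ik}(d)$ as in \eqref{C-recursive}. The symmetric computation on the second sum produces the analogous formula for $C_{ij}^{kj}(d)$.

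To finish, I would verify the regularity part of Definition~\ref{recur_dfn}, i.e.\ that the polar-free remainder of $\overline{\cF}(\alpha_i,\alpha_j,\hb,q)$ lies in $\QQ_\alpha[\hb,\hb^{-1}][[q]]$. There are two sources: the specializations of the polynomial parts of the $\cF$- and $D\cF$-recursions (both in $\QQ_\alpha[\hb,\hb^{-1}][[q]]$), and the $\hb$-independent remnants produced when the weight $\hb$ is split off from each pole in the previous step (in $\QQ_\alpha[[q]]$). Together they lie in $\QQ_\alpha[\hb,\hb^{-1}][[q]]$ as required, so $\overline{\cF}$ satisfies Definition~\ref{recur_dfn} with the $C$ of \eqref{C-recursive}. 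I expect the main obstacle to be the recognition step in the second paragraph: one must see that the particular combination $\cF+(\hb/(\mathbf{x}_1-\mathbf{x}_2))D\cF$ appearing in \eqref{newdef} is exactly the one for which the residue at $\hb=(\alpha_k-\alpha_j)/d$ of the $(\alpha_i,\alpha_j)$-specialization reassembles into the same combination at $(\alpha_i,\alpha_k)$. This is what makes the definition \eqref{newdef} match the one-variable recursion and forces the precise shape of the coefficients in \eqref{C-recursive}; once this identification is spotted, everything else is routine algebra.
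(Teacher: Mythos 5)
Your proposal is correct and is essentially the paper's own argument: the paper packages the same computation via the Residue Theorem on $S^2$ applied to $\frac{1}{\hb-z}\overline{\cF}(\alpha_i,\alpha_j,z,q)$, but the decisive algebra is identical — applying $q_1\partial_{q_1}-q_2\partial_{q_2}$ to the $\fC$-recursion produces the factor $-d$ (resp.\ $+d$) at each pole, the evaluation $\hbar=(\alpha_k-\alpha_j)/d$ turns $1-\tfrac{\alpha_k-\alpha_j}{\alpha_i-\alpha_j}$ into $\tfrac{\alpha_i-\alpha_k}{\alpha_i-\alpha_j}$, and the bracket reassembles into $\overline{\cF}(\alpha_i,\alpha_k,(\alpha_k-\alpha_j)/d,q)$, with $(-1)^d$ coming from $q_1=q_2=qe^{\pi i}$. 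Your handling of the pole-free remainder matches the paper's treatment of $\mathrm{Res}_{z=0,\infty}$ via $\tfrac{z}{\hb-z}=-1+\tfrac{\hb}{\hb-z}$.
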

\begin{proof}
$\cF$ is $\fC$-recursive, thus
$$
\frac{\fC_{ij}^{ik}(d)q_2^d}{\hb-\frac{\alpha_k-\alpha_j}{d}}
\cF(\alpha_i,\alpha_k,(\alpha_k\!-\!\alpha_j)/d,\mathbf{q})=\mathrm{Res}_{z=\frac{\alpha_k-\alpha_j}{d}}\bigg\{\frac{1}{\hb\!-\!z}\cF(\alpha_i,\alpha_j,z,\mathbf{q})\bigg\},
$$
$$
\mathrm{Res}_{z=0,\infty}\bigg\{\frac{1}{\hb\!-\!z}\cF(\alpha_i,\alpha_j,z,\mathbf{q})\bigg\}\in \QQ_{\alpha}[\hbar,\hbar^{-1}][[q]].
$$
Therefore
\begin{eqnarray*}
&&\mathrm{Res}_{z=0,\infty}\bigg\{\frac{1}{\hb\!-\!z}\overline{\cF}(\alpha_i,\alpha_j,z,\mathbf{q})\bigg\}\\
&=&\mathrm{Res}_{z=0,\infty}\bigg\{\frac{1}{\hb\!-\!z}\cF|_{q_1=q_2=qe^{\pi i}}-\frac{z}{z-\hbar}\frac{(q_1\frac{d}{dq_1}-q_2\frac{d}{dq_2})}{\alpha_i-\alpha_j}\cF|_{q_1=q_2=qe^{\pi i}}\bigg\}\\
&=&\mathrm{Res}_{z=0,\infty}\bigg\{\frac{1}{\hb\!-\!z}\cF(\alpha_i,\alpha_j,z,\mathbf{q})|_{q_1=q_2=qe^{\pi i}}\bigg\}\\
&&+\mathrm{Res}_{z=0,\infty}\bigg\{(-1+\frac{\hbar}{\hb\!-\!z})\frac{(q_1\frac{d}{dq_1}-q_2\frac{d}{dq_2})}{\alpha_i-\alpha_j}\cF(\alpha_i,\alpha_j,z,\mathbf{q})|_{q_1=q_2=qe^{\pi i}}\bigg\}.
\end{eqnarray*}
So 
$$
\mathrm{Res}_{z=0,\infty}\bigg\{\frac{1}{\hb\!-\!z}\overline{\cF}(\alpha_i,\alpha_j,z,\mathbf{q})\bigg\}\in \QQ_{\alpha}[\hbar,\hbar^{-1}][[q]].
$$

\begin{eqnarray*}
&&\mathrm{Res}_{z=\frac{\alpha_k-\alpha_j}{d}}\bigg\{\frac{1}{\hb\!-\!z}\overline{\cF}(\alpha_i,\alpha_j,z,q)\bigg\}\\
&=&\mathrm{Res}_{z=\frac{\alpha_k-\alpha_j}{d}}\bigg\{\frac{1}{\hb\!-\!z}\cF|_{q_1=q_2=qe^{\pi i}}+\frac{z(q_1\frac{d}{dq_1}-q_2\frac{d}{dq_2})}{\alpha_i-\alpha_j}\cF|_{q_1=q_2=qe^{\pi i}}\bigg\}\\
&=&\frac{\fC_{ij}^{ik}(d)q_2^d}{\hb-\frac{\alpha_k-\alpha_j}{d}}
\cF(\alpha_i,\alpha_k,(\alpha_k\!-\!\alpha_j)/d,\mathbf{q})|_{q_1=q_2=qe^{\pi i}}\\
&&+\frac{\frac{\alpha_k-\alpha_j}{d}(q_1\frac{d}{dq_1}-q_2\frac{d}{dq_2})}{\alpha_i-\alpha_j}\frac{\fC_{ij}^{ik}(d)q_2^d}{\hb-\frac{\alpha_k-\alpha_j}{d}}
\cF(\alpha_i,\alpha_k,(\alpha_k\!-\!\alpha_j)/d,\mathbf{q})|_{q_1=q_2=qe^{\pi i}}\\
&=&\frac{\alpha_i-\alpha_k}{\alpha_i-\alpha_j}\frac{\fC_{ij}^{ik}(d)q_2^d}{\hb-\frac{\alpha_k-\alpha_j}{d}}
\cF(\alpha_i,\alpha_k,(\alpha_k\!-\!\alpha_j)/d,\mathbf{q})|_{q_1=q_2=qe^{\pi i}}\\
&&+\frac{\frac{\alpha_k-\alpha_j}{d}}{\alpha_i-\alpha_j}\frac{\fC_{ij}^{ik}(d)q_2^d}{\hb-\frac{\alpha_k-\alpha_j}{d}}
(q_1\frac{d}{dq_1}-q_2\frac{d}{dq_2})\cF(\alpha_i,\alpha_k,(\alpha_k\!-\!\alpha_j)/d,\mathbf{q})|_{q_1=q_2=qe^{\pi i}}\\
&=&(-1)^d\frac{\alpha_i-\alpha_k}{\alpha_i-\alpha_j}\frac{\fC_{ij}^{ik}(d)q^d}{\hb-\frac{\alpha_k-\alpha_j}{d}}\big\{\cF(\alpha_i,\alpha_k,(\alpha_k\!-\!\alpha_j)/d,\mathbf{q})|_{q_1=q_2=qe^{\pi i}}\\
&&+\frac{\frac{\alpha_k-\alpha_j}{d}}{\alpha_i-\alpha_k}(q_1\frac{d}{dq_1}-q_2\frac{d}{dq_2})\cF(\alpha_i,\alpha_k,(\alpha_k\!-\!\alpha_j)/d,\mathbf{q})|_{q_1=q_2=qe^{\pi i}}\big\}\\
&=&(-1)^d\frac{\alpha_i-\alpha_k}{\alpha_i-\alpha_j}\frac{\fC_{ij}^{ik}(d)q^d}{\hb-\frac{\alpha_k-\alpha_j}{d}}\overline{\cF}((\alpha_i,\alpha_k,(\alpha_k\!-\!\alpha_j)/d,q).
\end{eqnarray*}
By the Residue Theorem on $S^2$,
\begin{eqnarray*}
\overline{\cF}(\alpha_i,\alpha_j,\hbar,q)&=&\mathrm{Res}_{z=\frac{\alpha_k-\alpha_j}{d}}\bigg\{\frac{1}{\hb\!-\!z}\overline{\cF}(\alpha_i,\alpha_j,z,q)\bigg\}+\mathrm{Res}_{z=\frac{\alpha_k-\alpha_i}{d}}\bigg\{\frac{1}{\hb\!-\!z}\overline{\cF}(\alpha_i,\alpha_j,z,q)\bigg\}\\
&&+\mathrm{Res}_{z=0,\infty}\bigg\{\frac{1}{\hb\!-\!z}\overline{\cF}(\alpha_i,\alpha_j,z,q)\bigg\}.
\end{eqnarray*}
So $\overline{\cF}$ is $C$-recursive with relations as (\ref{C-recursive}).
\end{proof}

Denote by $\QQ_{\tilde{\alpha}}=\QQ(\alpha_{1;1},\cdots,\alpha_{1;n},\alpha_{2;1},\cdots,\alpha_{2;n})$. Let $\dot{\cA}, \ddot{\cA}\in\QQ_{\tilde{\alpha}}[\mathbf{x}_1,\mathbf{x}_2]\Lau{\hb}[[\mathbf{q}]]$ as
\begin{eqnarray*}
&&\dot{\cA}(\mathbf{x}_1,\mathbf{x}_2,\hbar,q)=\sum_{d\ge0} \dot{\cA}_d(\hbar),\quad \ddot{\cA}(\mathbf{x}_1,\mathbf{x}_2,\hbar,q)=\sum_{d\ge0} \ddot{\cA}_d(\hbar),\quad \text{where}\\
&&\dot{\cA}_d(\hbar)=\sum_{d_1+d_2=d}q_1^{d_1}q_2^{d_2}\frac{\prod_{r=1}^{\ell}\prod_{l=1}^{a_{r;1}d_1+a_{r;2}d_2}
(a_{r;1}\mathbf{x}_1+a_{r;2}\mathbf{x}_2+l\hbar)}{\prod_{i=1}^2\prod_{l=1}^{d_i}\bigg(\prod_{j=1}^n(\mathbf{x}_i-\alpha_{i;j}+l\hbar)-\prod_{j=1}^n(\mathbf{x}_i-\alpha_{i;j})\bigg)},\\
&&\ddot{\cA}_d(\hbar)=\sum_{d_1+d_2=d}q_1^{d_1}q_2^{d_2}\frac{\prod_{r=1}^{\ell}\prod_{l=0}^{a_{r;1}d_1+a_{r;2}d_2-1}(a_{r;1}\mathbf{x}_1+a_{r;2}\mathbf{x}_2+l\hbar)}
{\prod_{i=1}^2\prod_{l=1}^{d_i}\bigg(\prod_{j=1}^n(\mathbf{x}_i-\alpha_{i;j}+l\hbar)-\prod_{j=1}^n(\mathbf{x}_i-\alpha_{i;j})\bigg)},
\end{eqnarray*}
with $a_{k,i}\in\ZZ^{\ge0}$.
Let
\begin{eqnarray*}
\dot{\sC}_{ i_1i_2}^{k}(s;d)&=&\frac{\prod_{r=1}^{\ell}\prod_{l=1}^{a_{r;s}d}(\sum_{t=1}^2a_{r;t}\alpha_{t;i_t})+\frac{l}{d}(\alpha_{s;k}-\alpha_{s;i_s}))}
{d{\prod_{l=1}^{d}\prod_{m=1}^n\atop(l,m)\neq(d,k)}(\alpha_{s;i_s}-\alpha_{s;m}+\frac{l}{d}(\alpha_{s;k}-\alpha_{s;i_s}))},\\
\ddot{\sC}_{ i_1i_2}^{k}(s;d)&=&\frac{\prod_{r=1}^{\ell}\prod_{l=0}^{a_{r;s}d-1}(\sum_{t=1}^2a_{r;t}\alpha_{t;i_t})+\frac{l}{d}(\alpha_{s;k}-\alpha_{s;i_s}))}
{d{\prod_{l=1}^{d}\prod_{m=1}^n\atop(l,m)\neq(d,k)}(\alpha_{s;i_s}-\alpha_{s;m}+\frac{l}{d}(\alpha_{s;k}-\alpha_{s;i_s}))}.
\end{eqnarray*}
Note that $\dot{\cA}, \ddot{\cA}$ can be considered as the associated $J$-functions of $\PP^{n-1}\times\PP^{n-1}$. By \cite[Section 6, Page 477]{Zinger1}, we have the following result.
\begin{lemm}\label{cYrec_lmm}
If $\sum_{r,i}a_{r,i}\!\le \!n$,
the power series $\dot{\cA}, \ddot{\cA}$ are $\dot{\sC}_{i_1i_2}^{j}(s,d)$ and $\ddot{\sC}_{i_1i_2}^{j}(s,d)$-recursive respectively.
\end{lemm}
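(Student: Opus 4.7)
The plan is to establish the $\dot{\sC}$-recursivity of $\dot{\cA}$ by direct partial-fraction analysis of $\dot{\cA}(\alpha_{1;i_1},\alpha_{2;i_2},\hbar,\mathbf{q})$ in the variable $\hbar$, performed coefficient-by-coefficient in $\mathbf{q}$; the argument for $\ddot{\cA}$ is completely parallel, the only change being the shift of the numerator range from $[1,a_{r;s}d]$ to $[0,a_{r;s}d-1]$. After specializing $\mathbf{x}_s = \alpha_{s;i_s}$, the subtracted term $\prod_j(\alpha_{s;i_s} - \alpha_{s;j})$ in each denominator factor vanishes (because of the $j=i_s$ entry), so the $(d_1,d_2)$-summand reduces to
\begin{equation*}
T_{d_1,d_2}(\hbar) = \frac{\prod_{r=1}^{\ell}\prod_{l=1}^{a_{r;1}d_1+a_{r;2}d_2}(a_{r;1}\alpha_{1;i_1} + a_{r;2}\alpha_{2;i_2} + l\hbar)}{\prod_{s=1}^{2}\prod_{l=1}^{d_s}\prod_{j=1}^{n}(\alpha_{s;i_s} - \alpha_{s;j} + l\hbar)}.
\end{equation*}
For generic equivariant parameters, the only finite poles of $T_{d_1,d_2}$ in $\hbar$ are the simple poles at $\hbar_0^{(k,d,s)} := (\alpha_{s;k} - \alpha_{s;i_s})/d$ for $1\le d\le d_s$, $k\ne i_s$, together with a pole of order $d_1+d_2$ at $\hbar=0$ from the $j=i_s$ factors. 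The hypothesis $\sum_{r,i}a_{r;i}\le n$ implies $\deg_\hbar(\text{num})\le\deg_\hbar(\text{denom})$, so $T_{d_1,d_2}$ has no pole at $\hbar=\infty$; the Laurent-polynomial principal part at $\hbar=0$ is therefore absorbed into the regular piece $\sum_r \cF_{ij}^r(\mathbf{d})\hbar^r$ allowed by Definition \ref{recur_dfn_mul}.

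The heart of the proof is the residue identity
\begin{equation*}
\mathrm{Res}_{\hbar=\hbar_0^{(k,d,1)}} T_{d+d_1',d_2}(\hbar) = \dot{\sC}_{i_1 i_2}^{k}(1;d)\cdot\bigl[\dot{\cA}(\alpha_{1;k},\alpha_{2;i_2},\hbar_0^{(k,d,1)},\mathbf{q})\bigr]_{q_1^{d_1'}q_2^{d_2}},
\end{equation*}
the $s=2$ case being symmetric. To verify it, set $\hbar_0:=\hbar_0^{(k,d,1)}$, so that $d\hbar_0=\alpha_{1;k}-\alpha_{1;i_1}$. On the numerator side, the key rewriting $a_{r;1}\alpha_{1;i_1}+l\hbar_0 = a_{r;1}\alpha_{1;k}+(l-a_{r;1}d)\hbar_0$ together with the index shift $l\mapsto l+a_{r;1}d$ converts the $r$-th product $\prod_{l=1}^{a_{r;1}(d+d_1')+a_{r;2}d_2}$ into $\prod_{l=1-a_{r;1}d}^{a_{r;1}d_1'+a_{r;2}d_2}$; its positive block reassembles the numerator of $[\dot{\cA}(\alpha_{1;k},\alpha_{2;i_2},\hbar_0)]_{(d_1',d_2)}$, while its non-positive block, restored to the original variables, gives $\prod_{l=1}^{a_{r;1}d}(a_{r;1}\alpha_{1;i_1}+a_{r;2}\alpha_{2;i_2}+l\hbar_0)$, which is exactly the numerator of $\dot{\sC}_{i_1 i_2}^{k}(1;d)$. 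Dually, splitting the $x_1$-denominator at $l=d$ and using $d\hbar_0=\alpha_{1;k}-\alpha_{1;i_1}$ to rewrite $(\alpha_{1;i_1}-\alpha_{1;j}+(d+m)\hbar_0)$ as $(\alpha_{1;k}-\alpha_{1;j}+m\hbar_0)$ for $m\ge 1$ peels off the denominator of $[\dot{\cA}(\alpha_{1;k},\alpha_{2;i_2},\hbar_0)]_{(d_1',d_2)}$, leaving exactly the denominator of $\dot{\sC}_{i_1 i_2}^{k}(1;d)$, with the explicit factor of $d$ there supplied by the derivative of the pole-producing factor $(\alpha_{1;i_1}-\alpha_{1;k}+d\hbar)$ at $\hbar_0$.

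Summing the residue identity over $d_1',d_2\ge 0$ and over the analogous contributions with $s=2$, partial-fraction decomposition of each $\mathbf{q}$-coefficient of $\dot{\cA}(\alpha_{1;i_1},\alpha_{2;i_2},\hbar,\mathbf{q})$ reproduces line (\ref{recurdfn}) of Definition \ref{recur_dfn_mul}, establishing $\dot{\sC}$-recursivity. The main obstacle is purely combinatorial: one must verify on the nose that the single shift $\alpha_{1;i_1}\mapsto\alpha_{1;k}$ simultaneously rebalances all $\ell$ hypergeometric numerator factors and the $x_1$-denominator in such a way that the combined ``correction'' is precisely $\dot{\sC}_{i_1 i_2}^{k}(1;d)$. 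The bookkeeping is tight but routine, and relies only on the structural form of the denominator $\prod_j(\mathbf{x}_s-\alpha_{s;j}+l\hbar)-\prod_j(\mathbf{x}_s-\alpha_{s;j})$ characteristic of the equivariant $J$-function of $\PP^{n-1}$, which is why viewing $\dot{\cA}$ and $\ddot{\cA}$ as twisted $J$-functions of $\PP^{n-1}\!\times\!\PP^{n-1}$ makes the recursion transparent.
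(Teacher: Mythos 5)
Your proof is correct. The paper itself gives no argument for this lemma: it simply observes that $\dot{\cA}$ and $\ddot{\cA}$ are the (twisted) $J$-functions of $\PP^{n-1}\times\PP^{n-1}$ and cites \cite[Section 6]{Zinger1}, and the partial-fraction/residue computation you carry out -- identifying the simple poles at $\hbar=(\alpha_{s;k}-\alpha_{s;i_s})/d$, absorbing the order-$(d_1+d_2)$ pole at $\hbar=0$ and the bounded behaviour at $\hbar=\infty$ (from $\sum_{r,i}a_{r;i}\le n$) into the Laurent-polynomial term of Definition \ref{recur_dfn_mul}, and matching each residue with $\dot{\sC}_{i_1i_2}^{k}(s;d)$ times a lower-order coefficient via the shift $\alpha_{s;i_s}+d\hbar_0=\alpha_{s;k}$ -- is exactly the standard argument that the citation encapsulates, done factor-by-factor in each $\PP^{n-1}$ slot. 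So you have supplied, correctly, the details the paper delegates to the reference.
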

Therefore
\begin{eqnarray}\label{trans}
&&\dot\cA(\alpha_{1,i_1},\alpha_{2,i_2},\hb,\mathbf{q})\\
&=&\sum_{\mathbf{d}\in(\ZZ^{\ge0})^2}^{\infty}\sum_{r=-N_{\mathbf{d}}}^{N_{\mathbf{d}}}
\dot\cA_{i_1i_2}^r(\mathbf{d})\hb^r\mathbf{q}^{\mathbf{d}}+
\sum_{d=1}^{\infty}\sum_{k\neq i_2}\frac{\dot\sC_{i_1i_2}^{k}(2,d)q_2^d}{\hb-\frac{\alpha_{2,k}-\alpha_{2,i_2}}{d}}
\dot\cA(\alpha_{1,i_1},\alpha_{2,k},(\alpha_{2,k}\!-\!\alpha_{2,i_2})/d,\mathbf{q})\nonumber\\
&&+
\sum_{d=1}^{\infty}\sum_{k\neq i}\frac{\dot\sC_{i_1i_2}^{k}(1,d)q_1^d}{\hb-\frac{\alpha_{1,k}-\alpha_{1,i_1}}{d}}
\dot\cA(\alpha_k,\alpha_{2,i_2},(\alpha_{1,k}\!-\!\alpha_{1,i_1})/d,\mathbf{q}),\nonumber\end{eqnarray}

similarly for $\ddot\cA(\alpha_{1,i_1},\alpha_{2,i_2},\hb,\mathbf{q})$.

For all $i, j, r$, denote
$$
\dot{\cK}=\dot{\cA}|_{a_{r,i}=a_r,\, \alpha_{1,i}=\alpha_i,\, \alpha_{2,j}=\alpha_j},\quad
\ddot{\cK}=\ddot{\cA}|_{a_{r,i}=a_r,\, \alpha_{1,i}=\alpha_i,\, \alpha_{2,j}=\alpha_j}.
$$
Under the conditions
$$a_{r,i}=a_r,\ \alpha_{1,i}=\alpha_i,\ \alpha_{2,j}=\alpha_j, \text{ for all } i,j,r,\quad
\quad $$
$\dot{\sC}_{ i_1i_2}^{j}(s;d)$  and $\ddot{\sC}_{ i_1i_2}^{j}(s;d)$ can be written as
$$
\dot{\fC}_{ij}^{ik}(d)=\frac{\prod_{r=1}^{\ell}\prod_{l=1}^{a_{r}d}(a_{r}(\alpha_i+\alpha_j)+\frac{l}{d}(\alpha_k-\alpha_j))}
{d{\prod_{l=1}^{d}\prod_{m=1}^n\atop(l,m)\neq(d,k)}(\alpha_j-\alpha_m+\frac{l}{d}(\alpha_k-\alpha_j))},
$$

$$
\ddot{\fC}_{ij}^{ik}(d)=\frac{\prod_{r=1}^{\ell}\prod_{l=0}^{a_{r}d-1}(a_{r}(\alpha_i+\alpha_j)+\frac{l}{d}(\alpha_k-\alpha_j))}
{d{\prod_{l=1}^{d}\prod_{m=1}^n\atop(l,m)\neq(d,k)}(\alpha_j-\alpha_m+\frac{l}{d}(\alpha_k-\alpha_j))},
$$
just changing $\alpha_{s,i_1}$ to $\alpha_{i}$, $\alpha_{s,i_2}$ to $\alpha_{j}$, $\alpha_{s,k}$ to $\alpha_{k}$.

Therefore $\dot{\cK}$ and $\ddot{\cK}$ satisfy the $\fC$-recursive as defined in Definition  \ref{recur_dfn_mul}.

Denote $$\dot{\cY}_{n;\a}(\mathbf{x},\hbar,q)=\dot{\cK}|_{q_1=q_2=qe^{\pi i}}+\frac{\hbar(q_1\frac{d}{dq_1}-q_2\frac{d}{dq_2})}{\mathbf{x}_1-\mathbf{x}_2}\dot{\cK}|_{q_1=q_2=qe^{\pi i}},$$
and
$$\ddot{\cY}_{n;\a}(\mathbf{x},\hbar,q)=\ddot{\cK}|_{q_1=q_2=qe^{\pi i}}+\frac{\hbar(q_1\frac{d}{dq_1}-q_2\frac{d}{dq_2})}{\mathbf{x}_1-\mathbf{x}_2}\ddot{\cK}|_{q_1=q_2=qe^{\pi i}}.$$
Thus $\dot{Y},\ddot{Y}$ are the non-equivariant form of $\dot{\cY},\ddot{\cY}$
\beq
\dot{Y}_{n;\a}(x,\hbar,q)=\dot{\cY}_{n;\a}(\mathbf{x},\hbar,q)|_{\alpha_i=0},\quad \ddot{Y}_{n;\a}(x,\hbar,q)=\ddot{\cY}_{n;\a}(\mathbf{x},\hbar,q)|_{\alpha_i=0}.
\eeq
Let
\begin{eqnarray}\label{C}
&&\dot{C}_{ij}^{ik}(d):=(-1)^d\frac{\alpha_i-\alpha_k}{\alpha_i-\alpha_j}\frac{\prod_{r=1}^{\ell}\prod_{l=1}^{a_{r}d}(a_{r}(\alpha_i+\alpha_j)
+\frac{l}{d}(\alpha_k-\alpha_j))}{d{\prod_{l=1}^{d}\prod_{m=1}^n\atop(l,m)\neq(d,k)}(\alpha_j-\alpha_m+\frac{l}{d}(\alpha_k-\alpha_j))},
\end{eqnarray}
\begin{eqnarray}\label{C_1}
&&\ddot{C}_{ij}^{ik}(d):=(-1)^d\frac{\alpha_i-\alpha_k}{\alpha_i-\alpha_j}\frac{\prod_{r=1}^{\ell}\prod_{l=0}^{a_{r}d-1}(a_{r}(\alpha_i
+\alpha_j)+\frac{l}{d}(\alpha_k-\alpha_j))}{d{\prod_{l=1}^{d}\prod_{m=1}^n\atop(l,m)\neq(d,k)}(\alpha_j-\alpha_m+\frac{l}{d}(\alpha_k-\alpha_j))}.
\end{eqnarray}
By Proposition \ref{cFrec_lmm2} and Proposition \ref{Mut}, we have
\begin{coro}\label{cIrec_lmm2}
The power series $\dot{\cY}_{n;\a}(\mathbf{x},\hbar,q)$, $\ddot{\cY}_{n;\a}(\mathbf{x},\hbar,q)$, are $C$-recursive as in (\ref{recur_dfn}) with $\dot{C}_{ij}^{ik}(d)$ and $\ddot{C}_{ij}^{ik}(d)$ given as above. Moreover, $\dot{\cY}_{n;\a}(\mathbf{x},\hbar,q)$ satisfies $\eta$-SPC with $\eta=\langle\a\rangle(\mathbf{x}_1+\mathbf{x}_2)^{\ell}$, and $(\dot{\cY}_{n;\a}(\mathbf{x},\hbar,q),\ddot{\cY}_{n;\a}(\mathbf{x},\hbar,q))$ satisfies $1$-MPC.
\end{coro}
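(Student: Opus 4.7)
The corollary has two logically independent halves: the $C$-recursivity of $\dot{\cY}_{n;\a}$ and $\ddot{\cY}_{n;\a}$, and the $\eta$-SPC / $1$-MPC properties. The former are obtained by combining Lemma \ref{cYrec_lmm} with Proposition \ref{cFrec_lmm2}; the latter by a direct application of Proposition \ref{Mut}.

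For the recursion, I would begin with $\dot{\cA},\ddot{\cA}$ on $\PP^{n-1}\!\times\!\PP^{n-1}$, which Lemma \ref{cYrec_lmm} identifies as $\dot{\sC}$- respectively $\ddot{\sC}$-recursive in the multi-variable sense of Definition \ref{recur_dfn_mul}. Performing the diagonal specialization $a_{r,i}=a_r$, $\alpha_{1,i}=\alpha_i$, $\alpha_{2,j}=\alpha_j$ produces $\dot{\cK},\ddot{\cK}$, and the recursion coefficients collapse to the $\dot{\fC}_{ij}^{ik}(d),\ddot{\fC}_{ij}^{ik}(d)$ recorded in the text. Since $\dot{\cK},\ddot{\cK}$ are symmetric in $\mathbf{x}_1,\mathbf{x}_2$, Proposition \ref{cFrec_lmm2} applies verbatim and gives that $\dot{\cY}_{n;\a}=\overline{\dot{\cK}}$ and $\ddot{\cY}_{n;\a}=\overline{\ddot{\cK}}$ are $C$-recursive with coefficients $(-1)^d\frac{\alpha_i-\alpha_k}{\alpha_i-\alpha_j}\dot{\fC}_{ij}^{ik}(d)$ and $(-1)^d\frac{\alpha_i-\alpha_k}{\alpha_i-\alpha_j}\ddot{\fC}_{ij}^{ik}(d)$. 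These are exactly the $\dot{C}_{ij}^{ik}(d),\ddot{C}_{ij}^{ik}(d)$ of (\ref{C}) and (\ref{C_1}); the $kj$-type coefficients follow from the $\mathbf{x}_1\leftrightarrow\mathbf{x}_2$ symmetry.

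For the polynomiality, I need to exhibit $\dot{\cY}_{n;\a}$ and $\ddot{\cY}_{n;\a}$ in the shape required by Proposition \ref{Mut}: $\sum_{d\ge0}q^d\,h_d/g_d$ with $h_d,g_d$ symmetric in $\mathbf{x}_1,\mathbf{x}_2$ and $g_d$ a product of factors of the form $\prod_j(\mathbf{x}_i-\alpha_j+l\hbar)-\prod_j(\mathbf{x}_i-\alpha_j)$. For $\dot{\cK}|_{q_1=q_2=qe^{\pi i}}$ this is immediate after clearing to a common denominator across the $d_1+d_2=d$ summands. The delicate point is the correction term $\hbar(q_1\partial_{q_1}-q_2\partial_{q_2})/(\mathbf{x}_1-\mathbf{x}_2)$, which a priori introduces a spurious $(\mathbf{x}_1-\mathbf{x}_2)$ in the denominator. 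The key observation is that the derivative inserts a factor $(d_1-d_2)$ under the $d_1+d_2=d$ sum; the resulting numerator sum is \emph{antisymmetric} in $\mathbf{x}_1\leftrightarrow\mathbf{x}_2$, since the shared numerator $\prod_{r,l}(a_r(\mathbf{x}_1+\mathbf{x}_2)+l\hbar)$ is symmetric while swapping $\mathbf{x}_1\leftrightarrow\mathbf{x}_2$ and relabeling $d_1\leftrightarrow d_2$ flips the sign of $(d_1-d_2)$. Hence $(\mathbf{x}_1-\mathbf{x}_2)$ divides the numerator cleanly, and $\dot{\cY}_{n;\a}$, and symmetrically $\ddot{\cY}_{n;\a}$, takes the form required by Proposition \ref{Mut}.

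With this shape in hand, I invoke Proposition \ref{Mut} twice: with $\cY=\cZ=\dot{\cY}_{n;\a}$ and $\eta=\langle\a\rangle(\mathbf{x}_1+\mathbf{x}_2)^\ell$ (a symmetric polynomial whose value at each $(\alpha_i,\alpha_j)$ is a nonzero element of $\QQ_\alpha$, so a legitimate choice in Definition \ref{SPC_dfn}) to obtain $\eta$-SPC for $\dot{\cY}_{n;\a}$, and with $\cY=\dot{\cY}_{n;\a}$, $\cZ=\ddot{\cY}_{n;\a}$, $\eta=1$ to obtain $1$-MPC. The main obstacle is the bookkeeping in the polynomiality step needed to verify the antisymmetry of the derivative-correction sum, since this is what allows $(\mathbf{x}_1-\mathbf{x}_2)$ to be cleared from the denominator; once this is confirmed the corollary reduces to a direct invocation of the two propositions.
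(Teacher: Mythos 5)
Your proposal is correct and follows exactly the paper's route: the paper derives the corollary by citing Proposition \ref{cFrec_lmm2} (applied to the $\fC$-recursive specializations $\dot{\cK},\ddot{\cK}$ coming from Lemma \ref{cYrec_lmm}) together with Proposition \ref{Mut}, which is precisely your two-step plan. Your verification that the correction term $\hbar(q_1\partial_{q_1}-q_2\partial_{q_2})/(\mathbf{x}_1-\mathbf{x}_2)$ still yields the shape required by Proposition \ref{Mut} (via antisymmetry of the $(d_1-d_2)$-weighted sum) is a detail the paper leaves implicit, and you supply it correctly.
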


\subsection{Differential operators on hypergeometric functions}
Denote $\mathbf{q}=(q_1,q_2)$, and $\mathbf{x}=(\mathbf{x}_1,\mathbf{x}_2)$. Fix an element $\cY(\mathbf{x},\hbar,\mathbf{q})\!\in\!\QQ_{\tilde{\alpha}}(\mathbf{x},\hbar)[[\mathbf{q}]]$. For $\mathbf{r}\!\in\!(\ZZ^{\ge0})^2$, $\mathbf{x}^\mathbf{r}:=\mathbf{x}_1^{r_1}\mathbf{x}_2^{r_2}$. Denote
$$\left\llbracket\cY(\mathbf{x},\hbar,\mathbf{q})\right\rrbracket_{\mathbf{q};\bd}:=\frac{f_{\bd}(\mathbf{x},\hbar)}{g_{\bd}(\mathbf{x},\hbar)}$$
for some homogeneous polynomials $f_{\bd}(\mathbf{x},\hbar),g_{\bd}(\mathbf{x},\hbar)\!\in\!\QQ[\tilde{\alpha},\mathbf{x},\hbar]$
satisfying
\begin{eqnarray}\label{assumecS_e}
&&f_{0}(\mathbf{x},\hbar)=g_{0}(\mathbf{x},\hbar),\qquad \deg\,f_{\bd}-\deg\,g_{\bd}=0,\qquad\\
&&g_{\bd}\left|_{\begin{subarray}{c}\mathbf{x}=0\\\tilde{\alpha}=0\end{subarray}}\right.\!\neq\!0.
\end{eqnarray}

By \cite[Section 4]{Popa}, there exists differential operators $\fD^{\mathbf{p}}$, where $\mathbf{p}\!\in\!(\ZZ^{\ge0})^2$, such that $\fD^{\mathbf{p}}\cY(\mathbf{x},\hbar,\mathbf{q})$ in $\QQ_{\tilde{\alpha}}(\mathbf{x},\hbar)[[\mathbf{q}]]$
satisfies the following properties:
\begin{enumerate}
\item[]
There exist $\cC^{(\mathbf{r})}_{\mathbf{p},s}\!:=\!\cC^{(\mathbf{r})}_{\mathbf{p},s}(\cY)\!\in\!\QQ[\tilde{\alpha}][[\mathbf{q}]]$
with $\mathbf{p},\mathbf{r}\!\in\!(\ZZ^{\ge 0})^2$, $s\!\in\!\ZZ^{\ge0}$, such that $\left\llbracket\cC^{(\mathbf{r})}_{\mathbf{p},s}\right\rrbracket_{\mathbf{q};\bd}$ is a homogeneous polynomial in $\alpha$ of degree $\!-\!|\mathbf{r}|\!+\!s$,
\begin{equation}\label{fDpexp_0e}
\fD^{\mathbf{p}}\cY(\mathbf{x},\hbar,\mathbf{q})=\hbar^{|\mathbf{p}|}\sum\limits_{s=0}^{\infty}\sum\limits_{\mathbf{r}\in(\ZZ^{\ge 0})^k}\cC^{(\mathbf{r})}_{\mathbf{p},s}(\mathbf{q})
\mathbf{x}^{\mathbf{r}}\hbar^{-s},
\end{equation}
\begin{equation}\label{EP_Dpq;01_e}
\left\llbracket\cC^{(\mathbf{r})}_{\mathbf{p},s}\right\rrbracket_{\mathbf{q};0}\!\!\!\!=\!\delta_{\mathbf{p},\mathbf{r}}\delta_{|\mathbf{r}|,s}\quad\forall\,\mathbf{p},\mathbf{r}\!\in\!(\ZZ^{\ge0})^2,\,s\!\in\!\ZZ^{\ge 0},
\end{equation}
and
\begin{equation}\label{cCprop_e}
\cC^{(\mathbf{r})}_{\mathbf{p},|\mathbf{r}|}=
\delta_{\mathbf{p},\mathbf{r}}\quad\textnormal{if}\quad|\mathbf{r}|\!\le\!|\mathbf{p}|.
\end{equation}
\end{enumerate}

Let $\gamma^k_j(\fD)$ be a polynomial of operator $\fD$ such that

$$
\left\llbracket\gamma^k_j(\fD)\dot{\cA}|_{\tilde{\alpha}=0}\right\rrbracket_{\mathbf{q};0}=\gamma^k_j(\mathbf{x}).
$$

Denote by
\begin{eqnarray*}
&&\gamma^k_j(\fD)\dot{\cK}:=\big(\gamma^k_j(\fD)\dot{\cA}\big)|_{a_{k,i}=a_k,\,\alpha_{1,i}=\alpha_i,\,\alpha_{2,j}=\alpha_j}, \end{eqnarray*}

\begin{eqnarray*}
\overline{\fD}^{k,j}\dot{\cY}_{n;\a}(\mathbf{x},\hbar,q)&:=&\gamma^k_j(\fD)\dot{\cK}|_{q_1=q_2=qe^{\pi i}}\\
&&+\frac{\hbar(q_1\frac{d}{dq_1}-q_2\frac{d}{dq_2})}{\mathbf{x}_1-\mathbf{x}_2}\gamma^k_j(\fD)\dot{\cK}|_{q_1=q_2=qe^{\pi i}}.
\end{eqnarray*}

Define
$$
\overline{J}_{k}(\dot{\cY}_{n;\a}(\mathbf{x},\hbar,q))(\gamma^k_j(\mathbf{x})):=\left\llbracket\overline{\fD}^{k,j}\dot{\cY}_{n;\a}(\mathbf{x},\hbar,q)\right\rrbracket_{x,k}\in \QQ[[q]][\mathbf{x}_1,\mathbf{x}_2]^{S_2}_k,$$

where $\QQ[[q]][\mathbf{x}_1,\mathbf{x}_2]^{S_2}_k$ is the set of all the symmetric polynomials of $\mathbf{x}=(\mathbf{x}_1,\mathbf{x}_2)$ with degree $k$.
Then
$$ \left\llbracket\overline{J}_{k}(\dot{\cY}_{n;\a}(\mathbf{x},\hbar,q))(\gamma^k_j(\mathbf{x}))\right\rrbracket_{q,0}=\gamma^k_j(\mathbf{x}).
$$
In particular $\overline{J}_{k}(\dot{\cY})$ as an element of $\mbox{End}(\QQ[[q]][\mathbf{x}_1,\mathbf{x}_2]^{S_2}_k)$ is invertible, with $\bar{c}_{\gamma^k_i,\gamma^k_j}$ given by
$$
\overline{J}^{-1}_{k}(\dot{\cY}_{n;\a}(\mathbf{x},\hbar,q))=\sum \bar{c}_{\gamma^k_i,\gamma^k_j}\gamma^k_j(\mathbf{x}).
$$
We define
$$
\cD^{k,i}\dot{\cY}_{n;\a}(\mathbf{x},\hbar,q)=\sum\bar{c}_{\gamma^k_i,\gamma^k_j}\overline{\fD}^{k,j}\dot{\cY}_{n;\a}(\mathbf{x},\hbar,q).
$$
Then
\begin{equation}\label{fDpexp_e}
\cD^{k,i}\dot{\cY}_{n;\a}(\mathbf{x},\hbar,q)=\hbar^{k}\sum\limits_{s=0}^{\infty}\sum\limits_{r,j}\cC^{(r,j)}_{k,i,s}(q)
\gamma^r_j(\mathbf{x})\hbar^{-s},
\end{equation}
and
\begin{equation}\label{EP_Dpq;0_e}
\left\llbracket\cC^{(r,j)}_{k,i,s}\right\rrbracket_{q;0}\!\!\!\!=\!\delta_{i,j}\delta_{k,r}\delta_{r,s},\quad\forall\,\mathbf{p},\mathbf{r}\!\in\!(\ZZ^{\ge0})^k,\,s\!\in\!\ZZ^{\ge 0}.
\end{equation}
Define
$\dot{\cC}^{(r)}_{k,i;s,j}\!:=\!\dot{\cC}^{(r)}_{k,i;s,j}(\dot{\cY}_{n;\a})\!\in\!\QQ[\alpha][[q]]$
for $k,s$ and $r\!\in\!\ZZ^{\ge 0}$ with $s\!\le\!k\!-\!r$ and $r\!\le\!k$
by
\begin{equation}\label{eqtiC_e}
\sum_{t=0}^r\sum_{\begin{subarray}{c}
s\in\ZZ^{\ge 0}\\s\le k-t\end{subarray}}
\dot{\cC}^{(t)}_{k,i;s,j}\cC_{s,j,r+r_1-t}^{(r_1,j_1)}=\delta_{i,j_1}\delta_{k,r_1}\delta_{r,0},\qquad
\forall\,\mathbf{r}\!\in\!\ZZ^{\ge 0},\,r_1\!\le\!k\!-\!r.
\end{equation}
These equations indeed uniquely determine $\dot{\cC}^{(r)}_{k,i;s,j}$,
since
\begin{eqnarray}\label{eqtiC_e2}
\sum_{t=0}^r\sum_{\begin{subarray}{c}
s\in\ZZ^{\ge 0}\\s\le k-t\end{subarray}}
\dot{\cC}^{(t)}_{k,i;s,j}\cC_{s,j,r+r_1-t}^{(r_1,j_1)}&=&
\sum_{t=0}^{r-1}\sum_{\begin{subarray}{c}
s\in\ZZ^{\ge 0}\\s\le k-t\end{subarray}}
\dot{\cC}^{(t)}_{k,i;s,j}\cC_{s,j,r+r_1-t}^{(r_1,j_1)}\\
&&+
\sum_{\begin{subarray}{c}s\in\ZZ^{\ge 0}\\
s<r_1\end{subarray}}\dot{\cC}^{(r)}_{k,i;s,j}\cC_{s,j,r+r_1-t}^{(r_1,j_1)}
+\dot{\cC}^{(r)}_{k,i;r_1,i}.\nonumber
\end{eqnarray}
Induction by (\ref{EP_Dpq;0_e}) and (\ref{eqtiC_e2}), we obtain
\begin{equation}
\dot{\cC}^{(0)}_{k,i;s,j}=\delta_{k,s}\delta_{i,j}.
\end{equation}

Similarly we can define $\ddot{\cC}^{(r)}_{k,i;s,j}$ for $\ddot{\cY}_{n;\a}(\mathbf{x},\hbar,q)$.
\begin{prop}\label{cIrec_lmm2'}
The power series $\cD^{k,j}\dot{\cY}_{n;\a}(\mathbf{x},\hbar,q)$ and $\cD^{k,j}\ddot{\cY}_{n;\a}(\mathbf{x},\hbar,q)$ are $C$-recursive as in (\ref{recur_dfn}) with $\dot{C}_{ij}^{ik}$ and $\ddot{C}_{ij}^{ik}$ as (\ref{C}) and (\ref{C_1}). Moreover, $(\dot{\cY}_{n;\a}(\mathbf{x},\hbar,q),\cD^{k,j}\dot{\cY}_{n;\a}(\mathbf{x},\hbar,q))$ satisfies $\eta$-MPC with $\eta=\langle\a\rangle(\mathbf{x}_1+\mathbf{x}_2)^{\ell}$, and $(\dot{\cY}_{n;\a}(\mathbf{x},\hbar,q),\cD^{k,j}\ddot{\cY}_{n;\a}(\mathbf{x},\hbar,q))$ satisfies $1$-MPC.
\end{prop}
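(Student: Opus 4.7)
The plan is to mimic the proof of Corollary~\ref{cIrec_lmm2}, propagating the recursion and the denominator structure through the chain
$$\dot{\cA}\ \lra\ \gamma^k_j(\fD)\dot{\cA}\ \lra\ \gamma^k_j(\fD)\dot{\cK}\ \lra\ \overline{\fD}^{k,j}\dot{\cY}_{n;\a}\ \lra\ \cD^{k,j}\dot{\cY}_{n;\a}$$
and its analogue with $\ddot{\cA},\ddot{\cK},\ddot{\cY}_{n;\a}$ in place of the dotted objects, and then invoking Proposition~\ref{Mut} at the end. Corollary~\ref{cIrec_lmm2} already handles the base case of the chain (no differential operator inserted); only the effect of the operators $\fD^{\mathbf p}$ and the subsequent $\QQ[[q]]$-linear recombination via the inverse matrix $\bar c_{\gamma^k_i,\gamma^k_j}$ needs to be checked.

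For $C$-recursiveness, I would first verify that $\fD^{\mathbf p}\dot{\cA}$ satisfies a recursion of the same shape as~(\ref{trans}). Two observations drive this: the pole locations $\hb=(\alpha_{s,k}-\alpha_{s,i_s})/d$ are independent of $\mathbf q$, and $\fD^{\mathbf p}$ is built from the $\mathbf q$-derivations $q_1\partial_{q_1},q_2\partial_{q_2}$ with coefficients in $\QQ_{\tilde\alpha}[[\mathbf q]]$. Applying $\fD^{\mathbf p}$ term by term to~(\ref{trans}) and using the identity $(q_i\partial_{q_i})(q_i^d f)=q_i^d(d+q_i\partial_{q_i})f$ shows that the induced system of recursions on the family $\{\fD^{\mathbf p}\dot{\cA}\}_{\mathbf p}$ is strictly triangular in $\mathbf p$, with the same pole positions. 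Since the combination $\cD^{k,j}$ is defined via~(\ref{eqtiC_e}) precisely to invert this triangular system, the recursion for $\cD^{k,j}\dot{\cY}_{n;\a}$ collapses to a genuine $C$-recursion, whose residue at $\hb=(\alpha_k-\alpha_j)/d$ is proportional to $\cD^{k,j}\dot{\cY}_{n;\a}$ itself. Specializing $\dot{\cA}\to\dot{\cK}$ and applying Proposition~\ref{cFrec_lmm2} to the $\overline{\cF}$-operation~(\ref{newdef}) then yields the stated constants $\dot{C}_{ij}^{ik}(d)$ of~(\ref{C}); the same argument applies to $\cD^{k,j}\ddot{\cY}_{n;\a}$ with $\ddot{C}_{ij}^{ik}(d)$ of~(\ref{C_1}).

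The two MPC claims follow by a direct application of Proposition~\ref{Mut}. Since $\fD^{\mathbf p}$ only differentiates in $\mathbf q$ and multiplies by polynomials in $\mathbf x$, every $\mathbf q$-coefficient of $\fD^{\mathbf p}\dot{\cA}$ retains the requisite denominator, a product of factors $\prod_j(\mathbf x_i-\alpha_{i,j}+l\hb)-\prod_j(\mathbf x_i-\alpha_{i,j})$. The $\overline{\cF}$-step~(\ref{newdef}) only introduces a factor $1/(\mathbf x_1-\mathbf x_2)$ which is cancelled by the antisymmetry of the numerator. The shape hypothesis of Proposition~\ref{Mut} is therefore met, and the same $\eta$'s that worked in Corollary~\ref{cIrec_lmm2} work here: $\eta=\langle\a\rangle(\mathbf x_1+\mathbf x_2)^{\ell}$ absorbs the $\ell$ factors $(\mathbf x_1+\mathbf x_2)$ coming from the $l\!\ge\!1$ numerator of $\dot{\cY}_{n;\a}$ and $\cD^{k,j}\dot{\cY}_{n;\a}$, whereas the $l\!=\!0$ factor already present in the numerator of $\ddot{\cY}_{n;\a}$ makes $\eta=1$ suffice for the pair $(\dot{\cY}_{n;\a},\cD^{k,j}\ddot{\cY}_{n;\a})$.

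The principal obstacle will be making the triangularity argument of the first step precise. One must track how each $\fD^{\mathbf p}$ interacts with the substitution $\hb=(\alpha_{s,k}-\alpha_{s,i_s})/d$ inside the residues of~(\ref{trans}) and then verify that the specific linear combination~(\ref{eqtiC_e}) defining $\cD^{k,j}$ annihilates exactly the lower-triangular mixing. The combinatorial bookkeeping is routine but lengthy, and is precisely what motivates the definition of $\cD^{k,j}$ through the inverse matrix $\bar c_{\gamma^k_i,\gamma^k_j}$ rather than through a single operator $\overline{\fD}^{k,j}$.
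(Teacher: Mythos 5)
Your proposal is correct and follows essentially the same route as the paper: the paper establishes $\fC$-recursiveness of $\gamma^k_j(\fD)\dot{\cK}$ and the stability of recursiveness under the linear recombination by citing Lemma 5.8(a),(b) of Popa, then applies Proposition~\ref{cFrec_lmm2} for the $\overline{\cF}$-step and Proposition~\ref{Mut} for both MPC claims, exactly as you outline. The only difference is that the "triangularity" argument you propose to carry out by hand is precisely the content of the cited Popa lemma, so your sketch amounts to expanding that reference rather than taking a different path.
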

\begin{proof}
 By \cite[Lemma 5.8 (a)]{Popa}, $\gamma_j^k(\fD)\dot{\cK}$ and $\gamma_j^k(\fD)\ddot{\cK}$ are $\fC$-recursive. Then by Proposition \ref{cFrec_lmm2} and \cite[Lemma 5.8(b)]{Popa}, $\cD^{k,j}\dot{\cY}_{n;\a}(\mathbf{x},\hbar,q)$ and $\cD^{k,j}\ddot{\cY}_{n;\a}(\mathbf{x},\hbar,q)$ are $C$-recursive as in Definition \ref{recur_dfn}.

The second assertion follows from Proposition \ref{Mut}.
\end{proof}

\section{localization formula}

The standard $\TT$-action on $\CC^n$ (as well as any other representation) induces corresponding
$\TT$-actions on $\overline{Q}_{0,2}(\Gr,d)$, $\cS$, $\cV_{n;\a}^{(d)}$, $\dot\cV_{n;\a}^{(d)}$,
and $\ddot\cV_{n;\a}^{(d)}$. Thus, $\cV_{n;\a}^{(d)}$,  $\dot\cV_{n;\a}^{(d)}$, and $\ddot\cV_{n;\a}^{(d)}$ have well-defined equivariant Euler classes
$$\mathbf{e}(\cV_{n;\a}^{(d)}),\quad \mathbf{e}(\dot\cV_{n;\a}^{(d)}), \quad \mathbf{e}(\ddot\cV_{n;\a}^{(d)})
\in H_{\TT}^*\big(\overline{Q}_{0,2}(\Gr,d)\big).$$
The universal cotangent line bundle for the $i$-th marked point
also has a well-defined equivariant Euler class, which will still be
denoted by~$\psi_i$. Let
\beq\label{cZ2}
\dot{\cZ}_{n;\a}(\mathbf{x},\hbar,q) :=
1+\sum_{d=1}^{\infty}q^d \ev_{1*}\!\!\left[\frac{\mathbf{e}(\dot\cV_{n;\a}^{(d)})}{\hbar\!-\!\psi_1}\right]
\in H_{\TT}^*(\Gr)[\hbar^{-1}]\big[\big[q\big]\big],\eeq

\beq\label{ZZdfn_e2}
\ddot{\cZ}_{n;\a}(\mathbf{x},\hbar,q) :=
1+\sum_{d=1}^{\infty}q^d \ev_{1*}\!\!\left[\frac{\mathbf{e}(\ddot\cV_{n;\a}^{(d)})}{\hbar\!-\!\psi_1}\right]
\in H_{\TT}^*(\Gr)[\hbar^{-1}]\big[\big[q\big]\big],\eeq
where $\ev_1:\overline{Q}_{0,2}(\Gr,d)\lra \Gr$ is as before.

Let $[\overline{\Delta}]$ be the equivariant class of the diagonal
$$
[\overline{\Delta}]=\sum_{k,j;s,i}g_{k,j;s,i}\mathbf{\gamma}^k_j(\mathbf{x})\otimes\mathbf{\gamma}^{s}_i(\mathbf{x})\in H_{\TT}^*(\Gr)\otimes H_{\TT}^*(\Gr),
$$
where $g_{k,j;s,i}\in\QQ[\alpha]$ are homogeneous polynomials.

Let
\beq\label{Zgadfn_e2}\begin{split}
\dot{\cZ}_{\gamma^k_j}(\hbar,q) &:= \gamma^k_j(\mathbf{x})+
\sum_{d=1}^{\infty}\!q^d \ev_{1*}\!\!\left[\frac{\mathbf{e}(\dot\cV_{n;\a}^{(d)})\ev_2^*\gamma^k_j(\mathbf{x})}
{\hbar\!-\!\psi_1}\right]\in H_{\TT}^*(\Gr)[\hbar^{-1}]\big[\big[q\big]\big],\\
\ddot{\cZ}_{\gamma^k_j}(\hbar,q) &:= \gamma^k_j(\mathbf{x})+
\sum_{d=1}^{\infty}\!q^d \ev_{1*}\!\!\left[\frac{\mathbf{e}(\ddot\cV_{n;\a}^{(d)})\ev_2^*\gamma^k_j(\mathbf{x})}
{\hbar\!-\!\psi_1}\right]\in H_{\TT}^*(\Gr)[\hbar^{-1}]\big[\big[q\big]\big],
\end{split}\eeq
where $\ev_1,\ev_2\!:\overline{Q}_{0,2}(\Gr,d)\!\lra\!\Gr$.

 Let
\beq\label{ZZdfn2}
\dot{\cZ}_{n;\a}(\hbar_1,\hbar_2,q) :=
\frac{[\overline{\Delta}]}{\hbar_1+\hbar_2}+\sum_{d=1}^{\infty}q^d\big\{\ev_1\!\times\!\ev_2\}_*\!\!\left[\frac{\mathbf{e}(\dot\cV_{n;\a}^{(d)})}
{(\hbar_1\!-\!\psi_1)(\hbar_2\!-\!\psi_2)}\right]\in H_{\TT}^*(\Gr)[\hbar_1^{-1},\hbar_2^{-1}]\big[\big[q\big]\big].
\eeq
Let
\begin{eqnarray*}
\dot{\cZ}^{b_2,\omega}(\hbar,q) &:=& (-\hbar)^{b_2}\omega+
\sum_{d=1}^{\infty}\!q^d \ev_{1*}\!\!\left[\frac{\mathbf{e}(\dot\cV_{n;\a}^{(d)})\psi_2^{b_2}\ev_2^*\omega}
{\hbar\!-\!\psi_1}\right]\in H_{\TT}^*(\Gr)[\hbar^{-1}]\big[\big[q\big]\big],\\
\ddot{\cZ}^{b_2,\omega}(\hbar,q) &:=& (-\hbar)^{b_2}\omega+
\sum_{d=1}^{\infty}\!q^d \ev_{1*}\!\!\left[\frac{\mathbf{e}(\ddot\cV_{n;\a}^{(d)})\psi_2^{b_2}\ev_2^*\omega}
{\hbar\!-\!\psi_1}\right]\in H_{\TT}^*(\Gr)[\hbar^{-1}]\big[\big[q\big]\big],
\end{eqnarray*}
where $\omega\in H_{\TT}^*(\Gr)$.

Marian, Oprea and Pandharipande \cite[Section 7.3]{MOP} described the fixed loci of the $\TT$-action on $\overline{Q}_{g,m}(\Gr,d)$, which
are indexed by connected decorated graphs.
However, in
the case $g=0,\, m\!=\!2$, the relevant graphs consist of a single strand (possibly consisting
of a single vertex)
with the two marked points attached at the opposite ends of the strand.
Such a graph can be described by an ordered set $(\mathrm{Ver},<)$ of vertices,
where $<$ is a strict order on the finite set~$\mathrm{Ver}$.
Given such a strand, denote by $v_{\min}$ and $v_{\max}$ its minimal and maximal elements
and by
$\text{Edg}$ its set of edges,
i.e.~of pairs of consecutive elements.
A decorated strand is a tuple
\beq \Gamma = \big(\mathrm{Ver},<;\mu,\mathfrak{d}\big),\eeq
where $(\mathrm{Ver},<)$ is a strand as above and
$$\mu\!:\mathrm{Ver}\lra \mathbf{n}^2 \qquad\hbox{and}\qquad \mathfrak{d}\!: \mathrm{Ver}\!\sqcup\!\text{Edg}\lra(\ZZ^{\ge0})^2$$
are maps such that
\beq
\mu(v_1)\neq\mu(v_2)  ~~~\hbox{if}~~ \{v_1,v_2\}\in\text{Edg}, \qquad
\mathfrak{d}(e)\neq0~~\forall\,e\!\in\!\text{Edg}.\eeq

\begin{figure}
\begin{center}
\begin{tikzpicture}
\filldraw [gray] (1,0) circle (2pt)
(1,0) circle (2pt)
(2,0) circle (2pt)
(3,0) circle (2pt)
(4,0) circle (2pt);
\draw (0,1)--(1,0);
\draw (1,0)--(2,0);
\draw[dashed] (2,0) -- (3,0);
\draw (3,0) -- (4,0);
\draw (5,1)--(4,0);
\end{tikzpicture}
\caption{$\Gamma$}
\end{center}
\end{figure}
In Figure 1, the vertices of a decorated strand $\Gamma$
are indicated by dots in the increasing order, with respect to~$<$, from left to right.
Let $Q_{\Gamma}$ be the fixed locus of $\overline{Q}_{0,2}(\Gr,d)$ corresponding to a decorated strand $\Gamma$, then $Q_{\Gamma}$ isomorphic to
$$
\prod_{v\in \Gamma}\overline{M}_{0,2|\mathfrak{d}(v)}
$$
up to a finite quotient, where $\overline{M}_{0,2|\mathfrak{d}(v)}$ is the Hassett moduli space of weighted pointed stable curves. Let $\pi_1:\cC_{\overline{M}_{0,2|\mathfrak{d}(v)}}\to \overline{M}_{0,2|\mathfrak{d}(v)}$ be the restriction of universal family $\pi:\cC\to \overline{Q}_{0,2}(\Gr,d)$. The universal bundle is split when restricting on $\cC_{\overline{M}_{0,2|\mathfrak{d}(v)}}$. We denote by $\cS^{\vee}|_{\cC_{\overline{M}_{0,2|\mathfrak{d}(v)}}}\cong\sL_1\oplus\sL_2$.

\begin{lemm}\label{recur}$\dot{\cZ}_{n;\a}(\mathbf{x},\hbar,q), \dot{\cZ}^{b_2,\omega}(\hbar,q)$ are $\dot{C}$-recursive. Moreover, $\dot{\cZ}_{n;\a}(\mathbf{x},\hbar,q)$ satisfies $\eta$-SPC, and  $(\dot{\cZ}_{n;\a}(\mathbf{x},\hbar,q),\dot{\cZ}^{b_2,\omega}(\hbar,q))$ satisfy $\eta$-MPC with $\eta=\langle\a\rangle(\mathbf{x}_1+\mathbf{x}_2)^{\ell}$. $\ddot{\cZ}_{n;\a}(\mathbf{x},\hbar,q),\ddot{\cZ}^{b_2,\omega}(\hbar,q)$ are $\ddot{C}$-recursive, and $(\dot{\cZ}_{n;\a}(\mathbf{x},\hbar,q),\ddot{\cZ})$, $(\dot{\cZ}_{n;\a}(\mathbf{x},\hbar,q),\ddot{\cZ}^{b_2,\omega}(\hbar,q))$ satisfy $1$-MPC.
\end{lemm}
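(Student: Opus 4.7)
The plan is to apply equivariant virtual localization on $\overline{Q}_{0,2}(\Gr,d)$, using the decorated-strand description of $\TT$-fixed loci recalled just above the lemma statement.

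First I would establish $\dot{C}$-recursivity of $\dot{\cZ}_{n;\a}(\mathbf{x},\hbar,q)$. By (\ref{AB}), the restriction to the $\TT$-fixed point $p_{ij}$ is a sum over decorated strands $\Gamma$ with $\mu(v_{\min}) = ij$. Each such strand either has no edge incident to $v_{\min}$ of positive degree (contributing polynomial-in-$\hbar$ terms) or has a distinguished first edge $e = \{v_{\min},v'\}$ with $\mu(v') = ik$ and $\mathfrak{d}(e) = d_1 > 0$. The latter edge parameterizes a degree-$d_1$ cover of the unique $\TT$-invariant $\PP^1 \subset \Gr$ joining $p_{ij}$ and $p_{ik}$. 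Combining the pole from $\hbar-\psi_1$ at the smoothing parameter $(\alpha_k-\alpha_j)/d_1$ with the restriction of $\sL^{\otimes a_r}(-\sigma_1)$ and the virtual normal bundle of this edge locus yields precisely the factor $\dot{C}_{ij}^{ik}(d_1)\,q^{d_1}/(\hbar - (\alpha_k-\alpha_j)/d_1)$ from (\ref{C}), multiplied by the truncated strand contribution $\dot{\cZ}_{n;\a}(\alpha_i,\alpha_k,(\alpha_k-\alpha_j)/d_1,q)$. This is the recursion (\ref{recurdfn_e2}). The same analysis handles $\dot{\cZ}^{b_2,\omega}(\hbar,q)$ because $\psi_2^{b_2}\ev_2^*\omega$ is localized at $\sigma_2$ and plays no role in the first-edge analysis at $\sigma_1$; the $\ddot{C}$-recursion for $\ddot{\cZ}_{n;\a}$ and $\ddot{\cZ}^{b_2,\omega}$ is identical, with $\sL^{\otimes a_r}(-\sigma_2)$ producing the numerator of (\ref{C_1}).

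For the polynomiality conditions, I would exploit the geometric meaning of $\Phi^\eta_{\cF,\cF'}$. By (\ref{garestr_e}), $\alpha_i+\alpha_j = c_1(\sL)|_{p_{ij}}$, so $\eta(\alpha_i,\alpha_j)e^{(\alpha_i+\alpha_j)z}$ is the restriction to $p_{ij}$ of the equivariant class $\eta(\mathbf{x})e^{c_1(\sL)z}$ on $\Gr$. Hence (\ref{PhiZdfn_e}) is the Atiyah-Bott expression for an equivariant integral over $\Gr \times \Gr$ of $\eta(\mathbf{x})e^{c_1(\sL)z}$ paired through the diagonal with $\cF(\hbar,qe^{\hbar z})\otimes\cF'(-\hbar,q)$. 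For $\cF=\cF'=\dot{\cZ}_{n;\a}$ this unfolds via the standard node-splitting identity to the double function $\dot{\cZ}_{n;\a}(\hbar_1,\hbar_2,q)$ of (\ref{ZZdfn2}) at $(\hbar_1,\hbar_2)=(\hbar,-\hbar)$ with an $e^{c_1(\sL)z}$ insertion, which is an equivariant pushforward to a point of a class regular in $\hbar$ once the $(\hbar_1+\hbar_2)^{-1}$ pole is cancelled by the diagonal. The appearance of $\eta=\langle\a\rangle(\mathbf{x}_1+\mathbf{x}_2)^\ell$ in the $\dot{\cZ}/\dot{\cZ}$-SPC and $\dot{\cZ}/\dot{\cZ}^{b_2,\omega}$-MPC, versus $\eta=1$ in the mixed $\dot{\cZ}/\ddot{\cZ}$-MPC, is forced by the fact that $\dot\cV$ and $\ddot\cV$ differ precisely by the $(-\sigma_i)$-twists, so $\prod_k a_k \, c_1(\sL)^\ell$ supplies the one missing Euler factor in the symmetric setting but not in the mixed one.

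The main obstacle will be the careful bookkeeping of Euler classes along the edge: one must verify that $\mathbf{e}$ of the virtual normal bundle of the edge locus, together with $\mathbf{e}(\dot\cV_{n;\a}^{(d_1)})$ restricted to the degree-$d_1$ cover, reproduces the exact numerator, the sign $(-1)^{d_1}$, and the prefactor $(\alpha_i-\alpha_k)/(\alpha_i-\alpha_j)$ in (\ref{C}); the correct way to organize this is to first match the $\fC$-recursion of $\dot\cA$ obtained in Lemma \ref{cYrec_lmm}, whose $\PP^{n-1}\times\PP^{n-1}$-style edge factors $\dot\sC_{i_1i_2}^k(s;d)$ transfer to $\Gr$ along the abelian/non-abelian correspondence of \cite{BFK}, and then apply the transformation of Proposition \ref{cFrec_lmm2} to pass from the two-variable $\mathbf{q}$-recursion to the single-variable one. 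The hypothesis $|\a|\le n$ enters precisely to eliminate spurious residues at $\mathbf{x}=\infty$ in the argument of Proposition \ref{Mut} and to ensure regularity of $g_d,\tilde g_d$ at the Atiyah-Bott fixed points.
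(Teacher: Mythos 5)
Your first paragraph (direct localization over decorated strands, isolating the first positive-degree edge at $\sigma_1$ to produce the simple pole at $\hbar=(\alpha_k-\alpha_j)/d_1$ with residue $\dot{C}_{ij}^{ik}(d_1)$ times the truncated series) is the route the paper takes for recursivity, namely the argument of \cite[Lemma 6.5]{Zinger1} transplanted to $\overline{Q}_{0,2}(\Gr,d)$; the paper also offers the alternative of quoting \cite[Lemma 7.5.2]{FK3} for $\Gr=\CC^{2n}\sslash GL_2(\CC)$. One caveat: your third paragraph proposes to pin down the edge coefficients by ``transferring'' the $\fC$-recursion of $\dot{\cA}$ from Lemma \ref{cYrec_lmm} along the abelian/non-abelian correspondence and then applying Proposition \ref{cFrec_lmm2}. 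Those statements live entirely on the hypergeometric side ($\dot{\cA}$, $\dot{\cK}$, $\dot{\cY}$) and cannot certify the edge Euler-class computation for the geometric series $\dot{\cZ}$: the coincidence of the two recursions is exactly what the overall scheme (recursion $+$ polynomiality $+$ Proposition \ref{uniqueness_prp}) is designed to exploit, so the coefficients in (\ref{C}) must be read off independently from the weights of the virtual normal bundle of the edge locus at $p_{ij}$.

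The genuine gap is in the polynomiality part. You rewrite $\Phi^\eta_{\dot{\cZ},\dot{\cZ}}$ as a diagonal pairing of $\dot{\cZ}(\hbar,q\e^{\hbar z})$ with $\dot{\cZ}(-\hbar,q)$ over $\Gr\times\Gr$ and appeal to node-splitting into the double $J$-function at $(\hbar_1,\hbar_2)=(\hbar,-\hbar)$. That identity is essentially the definition of $\Phi^\eta$ rewritten via Atiyah--Bott and a priori only produces an element of $\QQ_{\alpha}\Lau{\hb}[[z,q]]$; it does not explain why all negative powers of $\hbar$ cancel, and the $(\hbar_1+\hbar_2)^{-1}$ pole of the diagonal term is not the obstruction. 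The mechanism the paper uses (following Givental and \cite[Lemma 6.6]{Zinger1}) is to realize $\Phi^\eta$ as $\sum_d q^d\int_{\fX_d'(\Gr)}\e^{(\theta^*\Omega)z}\,\mathbf{e}(\cV_{n;\a}^{(d)})$, where $\fX_d'(\Gr)\subset\overline{Q}_{0,2}(\PP V\times\Gr,(1,d))$ carries the $\tilde{\TT}=(\CC^*)^{n(n-1)/2}\times\CC^*$-action with $\hbar$ the weight of the last factor on $V$, and $\theta$ is induced by the Pl\"ucker embedding; the substitution $q\mapsto q\e^{\hbar z}$ and the two factors $\dot{\cZ}$, $\dot{\cZ}'$ arise from localizing over the two fixed points of $\PP V$, while $\eta=\langle\a\rangle(\mathbf{x}_1+\mathbf{x}_2)^{\ell}$ (resp.\ $\eta=1$) accounts for the fiber of $\oplus_k\sL^{\otimes a_k}$ at the point joining the two halves. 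Polynomiality in $\hbar$ is then immediate because the right-hand side is an integral of a $\tilde{\TT}$-equivariant class over a compact space. Without constructing this auxiliary space and the comparison map $\theta$, your argument does not establish $\Phi^\eta\in\QQ_{\alpha}[\hbar][[z,q]]$.
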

\begin{proof}

The recursive of $\dot{\cZ}_{n;\a}(\mathbf{x},\hbar,q), \ddot{\cZ}_{n;\a}(\mathbf{x},\hbar,q),\dot{\cZ}^{b_2,\omega}(\hbar,q)$ and $\ddot{\cZ}^{b_2,\omega}(\hbar,q)$ can be proved parallel to \cite[Lemma 6.5]{Zinger1}, or be obtained by just applying \cite[Lemma 7.5.2]{FK3} to the twisted case of $W\sslash G$, and $\Gr$ be considered as GIT qoutient $\CC^{2n}\sslash GL_2(\CC)$.


The proofs of polynomiality are parallel to \cite[Lemma 6.6]{Zinger1}. We sketch it as follows.

Denote by the standard action $\CC^*$ on $\CC$ by $\hbar$. Let $V=\CC\oplus\CC$ be the representation of $\CC^*$ with weight $0$, and $-\hbar$. For $d\in\ZZ^{\ge0}$, the action $\tilde{\TT}:=(\CC^*)^{\frac{n(n-1)}{2}}\times\CC^*$ on $\CC^{(\frac{n(n-1)}{2})}\otimes\text{Sym}^d V^*$ induced an action on
$$
\overline{\fX}_d=\PP(\CC^{(\frac{n(n-1)}{2})}\otimes\text{Sym}^d V^*).
$$
Let
$$
\Omega=e(\gamma^*)\in H^*_{\tilde{\TT}}(\overline{\fX}_d)
$$
be the equivariant hyperplane class.
Let
\beq
\fX_d'(\Gr)=\big\{b\!\in\!\overline{Q}_{0,2}\big(\PP V\!\times\!\Gr,(1,d)\big)\!:
\ev_1(b)\!\in\!q_1\!\times\!\Gr,~\ev_2(b)\!\in\!q_2\!\times\!\Gr\big\}.\eeq
Using Pl\"{u}cker embedding, we have the following morphism $\theta$
$$
\theta:\fX_d'(\Gr)\to\fX_d'(\PP(\CC^{(\frac{n(n-1)}{2})}))\to\overline{\fX}_d.
$$
Then applying localization formula we can prove
\begin{eqnarray*}
\Phi^{\eta}_{\dot\cZ_{n;\a}(\mathbf{x},\hbar,q),\dot{\cZ}_{n;\a}(\mathbf{x},\hbar,q)}(\hbar,z,q)&=&
\sum_{d=0}^{\infty}q^d\!\!
\int_{\fX_d'(\Gr)}\!\!\!\e^{(\theta^*\Omega)z}\mathbf{e}(\cV_{n;\a}^{(d)}),\\
\Phi^{\eta}_{\dot\cZ_{n;\a}(\mathbf{x},\hbar,q),\dot{\cZ}^{(b_2,\omega)}}(\hbar,z,q)&=&
\sum_{d=0}^{\infty}q^d\!\!
\int_{\fX_d'(\Gr)}\!\!\!\e^{(\theta^*\Omega)z}\mathbf{e}(\cV_{n;\a}^{(d)})\,
\psi_2^{b_2}\ev_2^*\omega,
\end{eqnarray*}
with $\eta=\langle\a\rangle(x_1+x_2)^{\ell}$,
and
\beq
\Phi^{1}_{\dot\cZ_{n;\a}(\mathbf{x},\hbar,q),\ddot{\cZ}^{(b_2,\omega)}}(\hbar,z,q)=\sum_{d=0}^{\infty}q^d\!\!
\int_{\fX_d'(\Gr)}\!\!\!\e^{(\theta^*\Omega)z}\mathbf{e}(\dot{\cV}_{n;\a}^{(d)})\,
\psi_2^{b_2}\ev_2^*\omega,
\eeq
which are in $\QQ_{\alpha}[\hbar][[z,q]]$.
\end{proof}

Denote by 

\[ \dot{\cI}_{n;\a}(q):= =
  \begin{cases}
    1      & \quad \text{if } |\a|<n\\
    \dot{\cY}_{n;\a}(0,0,1,q)  & \quad \text{if } |\a|=n\\
  \end{cases}
\]

 $\ddot{\cI}_{n;\a}(q):=1$.

\begin{theo}\label{thm1'}
If $\ell\!\in\!\ZZ^{\ge0}$, $n\!\in\!\ZZ^+$, and $\a\!\in\!(\ZZ^{>0})^{\ell}$ are such that
$|\a|\!\le\!n$, then
\beq \dot{\cZ}_{n;\a}(\mathbf{x},\hbar,q)=\frac{\dot{\cY}_{n;\a}(\mathbf{x},\hbar,q)}{\dot{\cI}_{n;\a}(q)}
\in H_{\TT}^*(\Gr)\big[\big[\hbar^{-1},q\big]\big],\eeq
and
\beq \ddot{\cZ}_{n;\a}(\mathbf{x},\hbar,q)=\frac{\ddot{\cY}_{n;\a}(\mathbf{x},\hbar,q)}{\ddot{\cI}_{n;\a}(q)}
\in H_{\TT}^*(\Gr)\big[\big[\hbar^{-1},q\big]\big].\eeq
\end{theo}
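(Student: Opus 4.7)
The plan is to apply the uniqueness Proposition~\ref{uniqueness_prp} to the difference $\cF':=\dot{\cZ}_{n;\a}-\dot{\cY}_{n;\a}/\dot{\cI}_{n;\a}$, anchored at $\cF:=\dot{\cY}_{n;\a}$. I will verify that both are $\dot{C}$-recursive, that $(\cF,\cF')$ satisfies $\eta$-MPC with $\eta=\langle\a\rangle(\mathbf{x}_1+\mathbf{x}_2)^\ell$, that $\coeff{\cF}_{q,0}\ne 0$, and that $\cF'\equiv 0\pmod{\hbar^{-1}}$. The proposition then forces $\cF'=0$, proving the first identity. The second identity is obtained by the same recipe with $\cF'=\ddot{\cZ}_{n;\a}-\ddot{\cY}_{n;\a}/\ddot{\cI}_{n;\a}$, using $1$-MPC in place of $\eta$-MPC.

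The $\dot{C}$-recursions of $\dot{\cZ}_{n;\a}$ and $\dot{\cY}_{n;\a}$ are Lemma~\ref{recur} and Corollary~\ref{cIrec_lmm2} respectively, with matching constants~(\ref{C}). Since $\dot{\cI}_{n;\a}(q)\in 1+q\QQ[[q]]$ is a unit (as the $d=0$ term of $\dot{Y}_d$ is $(x_1-x_2)/(x_1-x_2)=1$) and the recursion~(\ref{recurdfn_e2}) is $\QQ[[q]]$-linear, $\dot{\cY}_{n;\a}/\dot{\cI}_{n;\a}$ and hence $\cF'$ are $\dot{C}$-recursive; the anchor $\coeff{\dot{\cY}_{n;\a}(\alpha_i,\alpha_j,\hbar,q)}_{q,0}=1$ is immediate. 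For the $\eta$-MPC of $(\dot{\cY}_{n;\a},\cF')$ one decomposes
\[
\Phi^\eta_{\dot{\cY},\cF'}\;=\;\Phi^\eta_{\dot{\cY},\dot{\cZ}}\;-\;\dot{\cI}^{-1}\,\Phi^\eta_{\dot{\cY},\dot{\cY}}.
\]
The second summand lies in $\QQ_\alpha[\hbar][[z,q]]$ by the $\eta$-SPC of $\dot{\cY}$ (Corollary~\ref{cIrec_lmm2}) together with $\dot{\cI}^{-1}\in\QQ[[q]]$. The first requires the cross-polynomiality of $(\dot{\cY}_{n;\a},\dot{\cZ}_{n;\a})$, which is the main technical step: it is established by the torus-localization argument of Lemma~\ref{recur} on the auxiliary moduli $\fX'_d(\Gr)$, where the Pl\"ucker embedding $\theta$ reduces the integral to one in which the marked-point contributions decouple, one factor matching the geometric $\dot{\cZ}$-side and the other matching the hypergeometric $\dot{\cY}$-side via the definition of $\dot{\cY}_{n;\a}$ as a symmetrization~(\ref{newdef}) of the $\PP^{n-1}\times\PP^{n-1}$ $J$-function $\dot{\cA}$ and the abelian/non-abelian correspondence of~\cite{BFK}.

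For the mod $\hbar^{-1}$ check, the expansion $(\hbar-\psi_1)^{-1}=\sum_{k\ge 0}\psi_1^k\hbar^{-k-1}$ yields $\dot{\cZ}_{n;\a}\equiv 1\pmod{\hbar^{-1}}$, so the task reduces to $\dot{\cY}_{n;\a}\equiv \dot{\cI}_{n;\a}\pmod{\hbar^{-1}}$. In the Fano case $|\a|<n$ with $\dot{\cI}_{n;\a}=1$, a degree count makes each $\dot{Y}_d$ ($d\ge 1$) of asymptotic order $\hbar^{1+(|\a|-n)d}$, which is $\le\hbar^{-1}$ once $|\a|\le n-2$; the borderline $|\a|=n-1$ is absorbed by the antisymmetric cancellation between the $d_1\leftrightarrow d_2$ summands from $(x_1-x_2+(d_1-d_2)\hbar)$ after the symmetrization~(\ref{newdef}). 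In the Calabi-Yau case $|\a|=n$ the polynomial-in-$\hbar$ part of $\dot{\cY}_{n;\a}$ is concentrated in degree $\le 0$ and coincides with $\dot{\cI}_{n;\a}(q):=\dot{Y}_{n;\a}(0,0,1,q)$ by evaluation at $\mathbf{x}=0$, $\hbar=1$. The double-dot check is strictly easier since $\ddot{\cI}_{n;\a}=1$ by definition and each $\ddot{Y}_d$ has one fewer $\hbar$-power per twist factor than $\dot{Y}_d$, so its polynomial-in-$\hbar$ contribution vanishes for $d\ge 1$ automatically. The principal obstacle throughout is the cross-MPC between the hypergeometric and geometric series, which rests on a careful matching of the localization contributions on $\fX'_d(\Gr)$ to the hypergeometric side via the abelian/non-abelian correspondence.
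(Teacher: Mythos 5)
Your reduction of the \emph{second} identity to Proposition~\ref{uniqueness_prp} is essentially the paper's argument, but your route to the \emph{first} identity has a genuine gap: the cross-polynomiality $\Phi^{\eta}_{\dot{\cY}_{n;\a},\dot{\cZ}_{n;\a}}\in\QQ_{\alpha}[\hbar][[z,q]]$ that your decomposition requires is not available before Theorem~\ref{thm1'} is proved, and your proposed derivation of it is circular. The localization on $\fX_d'(\Gr)$ in Lemma~\ref{recur} computes pairings in which \emph{both} slots are geometric ($\Phi^{\eta}_{\dot{\cZ},\dot{\cZ}}$, $\Phi^{\eta}_{\dot{\cZ},\dot{\cZ}^{b_2,\omega}}$, $\Phi^{1}_{\dot{\cZ},\ddot{\cZ}^{b_2,\omega}}$); there is no auxiliary space whose fixed-point contributions produce one geometric and one hypergeometric factor, and asserting that one factor ``matches the hypergeometric $\dot{\cY}$-side via the abelian/non-abelian correspondence'' is precisely the identity $\dot{\cZ}=\dot{\cY}/\dot{\cI}$ you are trying to prove. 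Nor can the mixed pairing be deduced from the two self-pairings: knowing $\eta$-SPC for $\dot{\cY}$ and for $\dot{\cZ}$ separately says nothing about the off-diagonal term, since $\Phi^{\eta}$ is not symmetric in its arguments (the first is evaluated at $(\hbar,q\e^{\hbar z})$, the second at $(-\hbar,q)$).

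The paper circumvents this by proving the first identity with the other uniqueness statement, \cite[Lemma 30.3.2]{MirSym}, which requires only that each series be $C$-recursive and satisfy SPC \emph{on its own}, at the price of needing agreement mod $\hbar^{-2}$ rather than mod $\hbar^{-1}$. That stronger agreement is what the bulk of the paper's proof supplies: parallel to \cite[Propositions 6.1 and 8.3]{CZ}, the coefficients $\dot{\cZ}^r_{ij}(d)$ for $r<0$ and the corresponding coefficients of $\dot{\cY}/\dot{\cI}$ are shown to satisfy the same ``secondary'' recursion, expressing them through residues of the series at $\hbar=0$ and integrals over $\overline{\cM}_{0,2|(d_1,d_2)}$; equality of the $q^0$ terms then forces equality mod $\hbar^{-2}$. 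This step has no counterpart in your proposal, and your direct mod-$\hbar^{-1}$ inspection (degree count for $|\a|\le n-2$, antisymmetry of the $\hbar^{1}$ term, evaluation at $\mathbf{x}=0$, $\hbar=1$ when $|\a|=n$) cannot substitute for the missing MPC hypothesis. Once the first identity is in hand, your treatment of $\ddot{\cZ}$ does go through and coincides with the paper's: the anchor $\dot{\cZ}=\dot{\cY}/\dot{\cI}$ makes the $1$-MPC statements of Lemma~\ref{recur} and Corollary~\ref{cIrec_lmm2} refer to the same pair, and Proposition~\ref{uniqueness_prp} applies.
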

\begin{proof} The proof is parallel to \cite[Theorem 3]{CZ}. 

When $|\a|\le n-2$, 
$$
\dot{\cY}_{n;\a}(\mathbf{x},\hbar,q)=1 \quad (\mathrm{mod}\ \hbar^{-2}).
$$ 
For $d\in\ZZ^{+}$,
$$
\dim Q_{0,2}(\Gr,d)-\mathrm{rk}\dot{\cV}_{n;\a}^{(d)}=(n-|a|)d+2(n-2)-1>2(n-2)=\dim \Gr.
$$ 
Thus 
$$
\dot{\cZ}_{n;\a}(\mathbf{x},\hbar,q)=1 \quad (\mathrm{mod}\ \hbar^{-2}).
$$
Therefore 
$$
\dot{\cZ}_{n;\a}(\mathbf{x},\hbar,q)=\dot{\cY}_{n;\a}(\mathbf{x},\hbar,q).
$$
When $n-1\le|\a|\le n$.
  By Lemma \ref{recur}, $\dot{\cZ}(\hbar,q)$ is $C$-recursive, therefore
\begin{eqnarray}
\label{recurdfn_e2'} &&\dot{\cZ}_{n;\a}(\mathbf{x},\hbar,q)(\alpha_i,\alpha_j,\hb,q)\\
&=&\sum_{d=0}^{\infty}\sum_{r=-N_d}^{N_d}
\dot{\cZ}_{ij}^r(d)\hb^rq^d+
\sum_{d=1}^{\infty}\sum_{k\neq j}\frac{\dot{C}_{ij}^{ik}(d)q^d}{\hb-\frac{\alpha_k-\alpha_j}{d}}
\cZ(\alpha_i,\alpha_k,(\alpha_k\!-\!\alpha_j)/d,q)\nonumber\\
&&+
\sum_{d=1}^{\infty}\sum_{k\neq i}\frac{\dot{C}_{ij}^{kj}(d)q^d}{\hb-\frac{\alpha_k-\alpha_i}{d}}
\dot{\cZ}(\alpha_k,\alpha_j,(\alpha_k\!-\!\alpha_i)/d,q), \qquad\forall~ij,ik,kj\!\in\!\mathbf{n},\nonumber
\end{eqnarray}
for some $\dot{\cZ}_{ij}^r(d)\!\in\!\QQ_{\alpha}$.

Parallel to the proof of \cite[Proposition 6.1]{CZ}, by applying localization formula to $\dot{\cZ}(\hbar,q)$, for all $r<0$, we have
\begin{eqnarray}
&&\sum_{d=1}^{\infty}\dot{\cZ}_{ij}^r(d)q^d\\
&=&\sum_{d=1}^{\infty}\frac{q^d}{d!}\bigg(\big(\sum_{d_1+d_2=d}\int_{\overline{\cM}_{0,2|(d_1,d_2)}}
\frac{\prod_{k\neq i,j}(\alpha_i-\alpha_k)\prod_{k\neq i, j}(\alpha_j-\alpha_k)\mathbf{e}(\dot\cV_{n;\a}^{(d)}(\alpha_i+\alpha_j))\psi_1^{-r-1}\psi_2^b}{\prod_{k\neq i,j} \mathbf{e}(R^0\pi_{1*}\sL_1(\alpha_i-\alpha_k))\prod_{k\neq i,j}\mathbf{e}(R^0\pi_{1*}\sL_2(\alpha_j-\alpha_k))}\big)\nonumber\\
&&\times\mathrm{Res}_{\hbar=0}\big\{\frac{(-1)^b}{\hbar^{b+1}}\dot{\cZ}(\alpha_i,\alpha_j,\hbar,q)\big\}\bigg)\nonumber.
\end{eqnarray}

Parallel to the proof of \cite[Proposition 8.3]{CZ}, we have
\begin{eqnarray}
&&\mathrm{Res}_{\hbar=0}\{\hbar^{-r}\dot{\cY}(\alpha_i,\alpha_j,\hbar,q)\}\\
&=&\sum_{d=1}^{\infty}\frac{q^d}{d!}\bigg(\big(\sum_{d_1+d_2=d}\int_{\overline{\cM}_{0,2|(d_1,d_2)}}
\frac{\prod_{k\neq i,j}(\alpha_i-\alpha_k)\prod_{k\neq i, j}(\alpha_j-\alpha_k)\mathbf{e}(\dot\cV_{n;\a}^{(d)}(\alpha_i+\alpha_j))\psi_1^{-r-1}\psi_2^b}{\prod_{k\neq i,j} \mathbf{e}(R^0\pi_{1*}\sL_1(\alpha_i-\alpha_k))\prod_{k\neq i,j}\mathbf{e}(R^0\pi_{1*}\sL_2(\alpha_j-\alpha_k))}\big)\nonumber\\
&&\times\mathrm{Res}_{\hbar=0}\big\{\frac{(-1)^b}{\hbar^{b+1}}\dot{\cY}(\alpha_i,\alpha_j,\hbar,q)\big\}\bigg)\nonumber.
\end{eqnarray}
This means that $\dot{\cZ}_{ij}^r(d)$ and $\frac{\dot{\cY}_{ij}^r(d)}{\dot{\cI}}$ satisfy the same recursive relations. For their coefficients of $q^0$ are the same,
$$
\sum_{d=1}^{\infty}\dot{\cZ}_{ij}^r(d)q^d=\frac{1}{\dot{\cI}_{n;\a}(q)}\sum_{d=1}^{\infty}\dot{\cY}_{ij}^r(d)q^d,
$$
$$
\dot{\cZ}_{n;\a}(\alpha_i,\alpha_j,\hb,q)= \frac{\dot{\cY}_{n;\a}(\alpha_i,\alpha_j,\hbar,q)}{\dot{\cI}(q)}\quad (\mathrm{mod}\ \hbar^{-2}).
$$
Then by \cite[Lemma 30.3.2]{MirSym},
$$
\dot{\cZ}_{n;\a}(\mathbf{x},\hbar,q)=\frac{\dot{\cY}_{n;\a}(\mathbf{x},\hbar,q)}{\dot{\cI}_{n;\a}(q)}.
$$
Because $(\dot{\cY}_{n;\a}(\mathbf{x},\hbar,q),\ddot{\cY}_{n;\a}(\mathbf{x},\hbar,q))$ satisfy 1-MPC, $(\dot{\cZ}_{n;\a}(\mathbf{x},\hbar,q)=\frac{\dot{\cY}_{n;\a}(\mathbf{x},\hbar,q)}{\dot{\cI}_{n;\a}(q)},\ddot{\cY})$ satisfy 1-MPC. For $(\dot{\cZ}_{n;\a}(\mathbf{x},\hbar,q),\ddot{\cZ}_{n;\a}(\mathbf{x},\hbar,q))$ satisfy 1-MPC, and 
$$
\ddot{\cZ}_{n;\a}(\alpha_i,\alpha_j,\hbar,q)=\frac{\ddot{\cY}_{n;\a}(\alpha_i,\alpha_j,\hbar,q)}{\ddot{\cI}_{n;\a}(q)}\quad (\mathrm{mod}\ \hbar^{-1}).
$$
By Proposition \ref{uniqueness_prp}, we have
$$
\ddot{\cZ}_{n;\a}(\mathbf{x},\hbar,q)=\frac{\ddot{\cY}_{n;\a}(\mathbf{x},\hbar,q)}{\ddot{\cI}_{n;\a}(q)}.
$$
\end{proof}

\begin{theo}\label{thm2'}
If $\ell\!\in\!\ZZ^{\ge0}$, $n\!\in\!\ZZ^+$, and $\a\!\in\!(\ZZ^{>0})^{\ell}$ are such that
$|\a|\!\le\!n$, then
\begin{eqnarray*}
\dot{\cZ}_{\gamma^k_j}(\hbar,q)&=&\dot{\cY}_{\gamma^k_j}(\hbar,q)\!\in\!H_{\TT}^*(\Gr)[\hbar^{-1}][[q]],\\
\ddot{\cZ}_{\gamma^k_j}(\hbar,q)&=&\ddot{\cY}_{\gamma^k_j}(\hbar,q)\!\in\!H_{\TT}^*(\Gr)[\hbar^{-1}][[q]],
\end{eqnarray*}
where
\begin{eqnarray*}
\dot{\cY}_{\gamma^k_j}(\hbar,q)&:=&\cD^{k,j}\dot{\cY}_{n;\a}(\mathbf{x},\hbar,q)+\sum_{t=1}^{k}\sum_{s=0}^{k-t}\dot{\cC}^{(t)}_{k,j;s,i}(q)\hbar^{k-t-s}
\cD^{s,i}\dot{\cY}_{n;\a}(\mathbf{x},\hbar,q)\!\in\!\QQ(x,\hbar)[[q]],\\
\ddot{\cY}_{\gamma^k_j}(\hbar,q)&:=&\cD^{k,j}\ddot{\cY}_{n;\a}(\mathbf{x},\hbar,q)+\sum_{t=1}^{k}\sum_{s=0}^{k-t}\ddot{\cC}^{(t)}_{k,j;s,i}(q)\hbar^{k-t-s}
\cD^{s,i}\ddot{\cY}_{n;\a}(\mathbf{x},\hbar,q)\!\in\!\QQ(x,\hbar)[[q]],
\end{eqnarray*}
and $\dot{\cC}^{(r)}_{k,i;s,j}, \ddot{\cC}^{(r)}_{k,i;s,j}$ are defined as in (\ref{eqtiC_e}).
\end{theo}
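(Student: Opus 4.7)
The plan is to apply the uniqueness Proposition~\ref{uniqueness_prp} to the difference $\dot{\cZ}_{\gamma^k_j}-\dot{\cY}_{\gamma^k_j}$ (and analogously in the double-dot case). Concretely, I need three ingredients: (i) both series are $\dot{C}$-recursive in the sense of Definition~\ref{recur_dfn}; (ii) the pair $(\dot{\cZ}_{n;\a},\dot{\cY}_{\gamma^k_j})$ satisfies the same $\eta$-MPC that $(\dot{\cZ}_{n;\a},\dot{\cZ}_{\gamma^k_j})$ does, with $\eta=\langle\a\rangle(\mathbf{x}_1+\mathbf{x}_2)^{\ell}$; and (iii) the two series agree modulo $\hb^{-1}$. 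Since the coefficient of $q^0$ in $\dot{\cZ}_{n;\a}(\alpha_i,\alpha_j,\hb,q)$ is $1\neq 0$, the proposition then forces the difference to vanish. The $\ddot{}$-statement follows by the same template, with $1$-MPC in place of $\eta$-MPC.

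For (i) and (ii): identifying $\dot{\cZ}_{\gamma^k_j}$ with $\dot{\cZ}^{0,\gamma^k_j(\mathbf{x})}$, Lemma~\ref{recur} immediately supplies $\dot{C}$-recursivity and $\eta$-MPC with $\dot{\cZ}_{n;\a}$. On the $\dot{\cY}$ side, Proposition~\ref{cIrec_lmm2'} says each $\cD^{s,i}\dot{\cY}_{n;\a}$ is $\dot{C}$-recursive and pairs with $\dot{\cY}_{n;\a}$ in an $\eta$-MPC; both conditions are $\QQ_\alpha[\hb]\cdot\QQ_\alpha[[q]]$-linear, so they pass to the finite sum defining $\dot{\cY}_{\gamma^k_j}$. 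Finally, Theorem~\ref{thm1'} writes $\dot{\cZ}_{n;\a}=\dot{\cY}_{n;\a}/\dot{\cI}_{n;\a}(q)$ with $\dot{\cI}_{n;\a}(q)\in 1+q\QQ[[q]]$. Rescaling by this scalar only multiplies $\Phi^{\eta}$ by $1/\dot{\cI}_{n;\a}(q\e^{\hb z})$, which lies in $\QQ[\hb][[z,q]]$ since $\e^{\hb z}\in\QQ[\hb][[z]]$; hence $(\dot{\cZ}_{n;\a},\dot{\cY}_{\gamma^k_j})$ inherits $\eta$-MPC from $(\dot{\cY}_{n;\a},\dot{\cY}_{\gamma^k_j})$.

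For (iii), the $\dot{\cZ}$ side is immediate from~(\ref{Zgadfn_e2}) since the push-forward of $1/(\hb-\psi_1)$ contributes only negative powers of $\hb$, giving $\dot{\cZ}_{\gamma^k_j}\equiv\gamma^k_j(\mathbf{x})\pmod{\hb^{-1}}$. For $\dot{\cY}_{\gamma^k_j}$, I would expand each $\cD^{s,i}\dot{\cY}_{n;\a}$ by (\ref{fDpexp_e}) and collect the coefficient of $\hb^m$ for $m\ge 0$; it takes the form $\sum_{r,j_1}\gamma^r_{j_1}(\mathbf{x})\cdot\sum_{t,s,i}\dot{\cC}^{(t)}_{k,j;s,i}(q)\,\cC^{(r,j_1)}_{s,i,k-t-m}(q)$, which is precisely the bilinear sum on the left-hand side of the defining relations~(\ref{eqtiC_e}). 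By construction of the $\dot{\cC}^{(t)}_{k,j;s,i}$ as formal inverses to the matrix of~$\cC$'s, this sum collapses to $\delta_{m,0}\delta_{r,k}\delta_{j_1,j}$, giving $\dot{\cY}_{\gamma^k_j}\equiv\gamma^k_j(\mathbf{x})\pmod{\hb^{-1}}$.

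The main technical obstacle is the bookkeeping in (iii): matching the indices in the collapse through (\ref{eqtiC_e}), and tracking the constraint $s\le k-t$ that truncates the sum. A secondary, but purely formal, point is verifying that the diagonal twist $q_1\!=\!q_2\!=\!q\e^{\pi\sqrt{-1}}$ used to pass from $\dot{\cA}$ to $\dot{\cY}_{n;\a}$ (and similarly for $\cD^{s,i}$) is compatible with MPC. Once these are settled, the $\ddot{}$-case repeats the same steps, replacing $\dot{\cY}_{n;\a},\dot{C},\dot{\cC},\dot{\cI}$ by their double-dot counterparts and using the $1$-MPC statements of Lemma~\ref{recur} and Proposition~\ref{cIrec_lmm2'} throughout.
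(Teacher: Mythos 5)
Your proposal is correct and follows essentially the same route as the paper: both arguments reduce the identity to Proposition~\ref{uniqueness_prp} after checking $\dot C$-recursivity and $\eta$-MPC (via Lemma~\ref{recur} on the $\cZ$-side and Proposition~\ref{cIrec_lmm2'} on the $\cY$-side) together with agreement modulo $\hbar^{-1}$, the latter coming from the defining relations~(\ref{eqtiC_e}) for $\dot{\cC}^{(t)}_{k,j;s,i}$. Your treatment is in fact more careful than the paper's two-line proof, in particular in rescaling the first MPC slot by $\dot{\cI}_{n;\a}(q\e^{\hbar z})^{-1}$ so that $\dot{\cZ}_{n;\a}$ and $\dot{\cY}_{\gamma^k_j}$ can be paired, a step the paper leaves implicit.
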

\begin{proof}
By the definition of $\dot{\cC}^{(r)}_{k,i;s,j}$,
$$
\dot{\cY}_{\gamma^k_j}(\hbar,q)=\gamma^k_j(\mathbf{x}) = \dot{\cZ}_{\gamma^k_j}(\hbar,q) \quad (\mathrm{mod}\ \hbar^{-1}).
$$
By Proposition \ref{uniqueness_prp} and Lemma \ref{recur},
$$
\dot{\cY}_{\gamma^k_j}(\hbar,q)=\dot{\cZ}_{\gamma^k_j}(\hbar,q).
$$
Similar argument holds for $\ddot{\cZ}_{\gamma^k_j}(\hbar,q)$ and $\ddot{\cY}_{\gamma^k_j}(\hbar,q)$.
\end{proof}

\begin{theo}\label{main1}If $\ell\!\in\!\ZZ^{\ge0}$, $n\!\in\!\ZZ^+$, and $\a\!\in\!(\ZZ^{>0})^{\ell}$ are such that
$|\a|\!\le\!n$, then
\beq\label{main_1}
\dot{\cZ}_{n;\a}(\hbar_1,\hbar_2,q) =\frac{1}{\hbar_1+\hbar_2}\sum_{k,j;s,i}g_{k,j;s,i}\dot{\cY}_{\gamma^k_j}(\hbar_1,q)\ddot{\cY}_{\gamma^{s}_i}(\hbar_2,q).
\eeq
\end{theo}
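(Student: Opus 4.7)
My plan is to prove the equivalent identity
$$(\hbar_1+\hbar_2)\dot{\cZ}_{n;\a}(\hbar_1,\hbar_2,q)=\sum_{k,j;s,i}g_{k,j;s,i}\dot{\cZ}_{\gamma^k_j}(\hbar_1,q)\otimes\ddot{\cZ}_{\gamma^s_i}(\hbar_2,q),$$
which is what Theorem~\ref{main1} reduces to once the $\dot{\cY},\ddot{\cY}$-factors are replaced by $\dot{\cZ},\ddot{\cZ}$-factors via Theorem~\ref{thm2'}. By the Atiyah--Bott criterion~\eqref{zero} this claim is determined by restriction to the torus fixed points $p_{i_1j_1}\times p_{i_2j_2}$ of $\Gr\times\Gr$; after such restriction, both sides lie in $\QQ_{\alpha}(\hbar_1,\hbar_2)[[q]]$.

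With $\hbar_2$ frozen as a scalar parameter, I would apply Proposition~\ref{uniqueness_prp} in the $\hbar_1$ variable to the difference
$$\cF(\hbar_1,\hbar_2,q):=(\hbar_1+\hbar_2)\dot{\cZ}_{n;\a}(\hbar_1,\hbar_2,q)-\sum_{k,j;s,i}g_{k,j;s,i}\dot{\cZ}_{\gamma^k_j}(\hbar_1,q)\otimes\ddot{\cZ}_{\gamma^s_i}(\hbar_2,q).$$
Three ingredients must be verified. First, $\cF$ is $\dot{C}$-recursive in $\hbar_1$: I would expand $1/(\hbar_2-\psi_2)=\sum_{b_2\ge 0}\hbar_2^{-b_2-1}\psi_2^{b_2}$ inside the defining integral of $\dot{\cZ}_{n;\a}(\hbar_1,\hbar_2,q)$ and use the diagonal K\"unneth decomposition determined by $[\overline{\Delta}]=\sum g_{k,j;s,i}\gamma^k_j\otimes\gamma^s_i$ to rewrite the left summand as a formal sum of tensor products in which all $\hbar_1$-dependence is carried by the auxiliary series $\dot{\cZ}^{b_2,\gamma^s_i}(\hbar_1,q)$; by Lemma~\ref{recur} both these series and each $\dot{\cZ}_{\gamma^k_j}(\hbar_1,q)$ on the right are $\dot{C}$-recursive, so $\cF$ is as well. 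Second, the pair $(\dot{\cZ}_{n;\a}(\mathbf{x},\hbar_1,q),\cF)$ satisfies the $\eta$-MPC with $\eta=\langle\a\rangle(\mathbf{x}_1+\mathbf{x}_2)^{\ell}$, again by Lemma~\ref{recur}, because $\cF$ decomposes as a $\QQ_{\alpha}(\hbar_2)[[q]]$-linear combination of factors to which the lemma directly applies. Third, $\cF\equiv 0\pmod{\hbar_1^{-1}}$: the $q^0$ contribution to the left hand side is exactly $[\overline{\Delta}]$, matching $\sum g_{k,j;s,i}\gamma^k_j\otimes\gamma^s_i=[\overline{\Delta}]$ on the right, while the $q^d$ coefficients for $d\ge 1$ are seen to be $O(\hbar_1^{-1})$ by direct inspection of the defining integrals together with the expansion of $1/(\hbar_1-\psi_1)$.

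Given these three inputs, Proposition~\ref{uniqueness_prp} forces $\cF=0$, which proves the displayed identity and hence Theorem~\ref{main1}. The overall strategy parallels the one-pointed argument used in the proof of Theorem~\ref{thm1'} and the analogous double $J$-function identity for stable quotients proved in \cite{Zinger1}.

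The main obstacle I anticipate is verifying the third ingredient cleanly. Multiplication by $\hbar_1+\hbar_2$ promotes part of the $\hbar_1^{-1}$-order contribution of $\dot{\cZ}_{n;\a}(\hbar_1,\hbar_2,q)$ into the $\hbar_1^0$-order, so at each degree $d\ge 1$ the $\hbar_1^0$-component of the left hand side receives a genuine contribution which must be matched exactly by the $\hbar_1^0$-part of $\sum g_{k,j;s,i}\gamma^k_j\otimes\ddot{\cZ}_{\gamma^s_i}(\hbar_2,q)$ on the right. This matching comes down to a symmetry argument interchanging the two marked points, together with the definitional relationship between $\dot{\cV}_{n;\a}^{(d)}$ and $\ddot{\cV}_{n;\a}^{(d)}$ (the $-\sigma_1$ versus $-\sigma_2$ twist); once this bookkeeping is organized, the remainder of the proof is a formal application of Proposition~\ref{uniqueness_prp}.
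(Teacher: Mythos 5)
Your overall strategy (restrict to fixed points, show the difference is $\dot{C}$-recursive and satisfies MPC in $\hbar_1$, then invoke Proposition~\ref{uniqueness_prp}) is the same as the paper's, but your execution of the decisive third ingredient contains both an outright false claim and a gap. You assert that for $d\ge1$ the $q^d$-coefficients of $\cF$ are $O(\hbar_1^{-1})$ ``by direct inspection.'' They are not: since $\tfrac{\hbar_1+\hbar_2}{\hbar_1-\psi_1}=1+\tfrac{\psi_1+\hbar_2}{\hbar_1-\psi_1}$, the $\hbar_1^0$-part of the left-hand side at degree $d>0$ is the nontrivial series $\sum_{d>0}q^d\int_{\overline{Q}_{0,2}(\Gr,d)}\mathbf{e}(\dot\cV_{n;\a}^{(d)})\ev_1^*\phi_{ij}\,\ev_2^*\phi_{kl}/(\hbar_2-\psi_2)$, while the $\hbar_1^0$-part of the right-hand side is $\sum g_{k,j;s,i}\,\gamma^k_j|_{p_{ij}}\,\ddot{\cY}_{\gamma^s_i}(\hbar_2,q)|_{p_{kl}}$; neither vanishes. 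Your closing paragraph correctly identifies this as the real issue, which contradicts your earlier claim, but you then defer it to ``a symmetry argument interchanging the two marked points'' without carrying it out. That matching is precisely the non-formal content of the theorem, so as written the proof is incomplete.

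The paper closes exactly this gap by a \emph{second} application of Proposition~\ref{uniqueness_prp}, now in the $\hbar_2$-variable: by Lemma~\ref{recur} the mod-$\hbar_1^{-1}$ residue of the left-hand side, i.e.\ \eqref{h_2}, is $\ddot{C}$-recursive and satisfies $1$-MPC with respect to $\dot{\cZ}_{n;\a}(\mathbf{x},\hbar_2,q)$ (this is where the $\sigma_1$-versus-$\sigma_2$ twist and the marked-point swap you allude to actually enter, since after relabeling the points $\dot\cV_{n;\a}^{(d)}$ plays the role of $\ddot\cV_{n;\a}^{(d)}$); the right-hand side \eqref{h_1} has the same properties by Proposition~\ref{cIrec_lmm2'} and Theorem~\ref{thm2'}; and the two agree mod $\hbar_2^{-1}$ because both reduce to $[\overline{\Delta}]|_{p_{ij}\times p_{kl}}$. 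An alternative, which your sketch gestures toward, would be to identify \eqref{h_2} directly as $\ddot{\cZ}_{\phi_{ij}}(\hbar_2,q)$ after swapping the marked points and then apply Theorem~\ref{thm2'} together with the expansion $\phi_{ij}=\sum g_{k,j;s,i}\,\gamma^k_j|_{p_{ij}}\,\gamma^s_i$; that would work, but you would need to state and justify the swap explicitly rather than list it as an anticipated obstacle. You should also note that Lemma~\ref{recur} gives recursivity and MPC only for each coefficient of $\hbar_2^m$, $m\le0$, of $(\hbar_1+\hbar_2)\dot{\cZ}_{n;\a}(\hbar_1,\hbar_2,q)$ separately, which is how the paper phrases the first two ingredients; your K\"unneth rewriting of the left summand in terms of $\dot{\cZ}^{b_2,\omega}$ accomplishes the same thing and is fine.
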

\begin{proof}By (\ref{zero}) and (\ref{AB}),
\beq
(\hbar_1+\hbar_2)\dot{\cZ}_{n;\a}(\hbar_1,\hbar_2,q)|_{p_{ij}\times p_{kl}}=[\overline{\Delta}]|_{p_{ij}\times p_{kl}}+(\hbar_1+\hbar_2)\sum_{d>0} q^d\int_{\overline{Q}_{0,2}(\Gr,d)}\frac{\mathbf{e}(\dot\cV_{n;\a}^{(d)})\ev_1\phi_{ij}\ev_2\phi_{kl}}{(\hbar_1-\psi_1)(\hbar_2-\psi_2)},
\eeq
for all $ij,kl\in \mathbf{n}^2$.
By Lemma \ref{recur}, the coefficient of $\hbar_2^m$ in $ (\hbar_1+\hbar_2)\dot{\cZ}_{n;\a}(\hbar_1,\hbar_2,q)$ is $\dot{C}$-recursive and satisfies $\eta=\langle\a\rangle(\mathbf{x}_1+\mathbf{x}_2)^{\ell}$-MPC with respect to $\dot{\cZ}_{n;\a}(\hbar,q)$ for all $m\le0$. By Proposition \ref{uniqueness_prp}, it follows that in order to prove (\ref{main_1}) it suffices to show that
\beq\label{h-com}
(\hbar_1+\hbar_2)\dot{\cZ}_{n;\a}(\hbar_1,\hbar_2,q)|_{p_{ij}\times p_{kl}} =\sum_{k,j;s,i}g_{k,j;s,i}\dot{\cY}_{\gamma^k_j}(\hbar_1,q)\ddot{\cY}_{\gamma^{s}_i}(\hbar_2,q)|_{p_{ij}\times p_{kl}}\quad (\mathrm{mod}\ \hbar_1^{-1})
\eeq
The right-hand side of (\ref{h-com}) mod $\hbar^{-1}_1$ is
\beq\label{h_1}
\sum_{k,j;s,i}g_{k,j;s,i}\gamma^k_j|_{ p_{kl}}\ddot{\cY}_{\gamma^{s}_i}(\hbar_2,q)|_{ p_{kl}}.
\eeq
The left-hand side of (\ref{h-com}) mod $\hbar^{-1}_1$ is
\beq\label{h_2}
[\overline{\Delta}]|_{p_{ij}\times p_{kl}}+\sum_{d>0} q^d\int_{\overline{Q}_{0,2}(\Gr,d)}\frac{\mathbf{e}(\dot\cV_{n;\a}^{(d)})\ev_1\phi_{ij}\ev_2\phi_{kl}}{\hbar_2-\psi_2}.
\eeq
By Lemma \ref{recur}, (\ref{h_2}) is $\ddot{C}$-recursive and satisfies $\eta=1$-MPC with respect to $\dot{\cZ}_{n;\a}(\hbar_2,q)$. Since (\ref{h_1}) also satisfies these properties. The power series
(\ref{h_2}) and (\ref{h_1}) agree if only if they agree mod $\hbar_2^{-1}$. The latter is the case since both are the equivariant Poincar\'{e} dual to the diagonal restricted to $p_{ij}\times p_{kl}$.
\end{proof}

Denote by 
$$\dot{C}^{(t)}_{k,j;s,i}(q)=\dot{\cC}^{(t)}_{k,j;s,i}(q)|_{\alpha=0}, \quad
\ddot{C}^{(t)}_{k,j;s,i}(q)=\ddot{\cC}^{(t)}_{k,j;s,i}(q)|_{\alpha=0}, \quad
\dot{I}=\dot{\cI}|_{\alpha=0}, \quad
\ddot{I}_{n;\a}=\ddot{\cI}_{n;\a}|_{\alpha=0},$$
$$D^{k,j}\dot{Y}_{n;\a}(\hbar,q)=\cD^{k,j}\dot{\cY}_{n;\a}(\hbar,q)|_{\alpha=0}, \quad
D^{k,j}\ddot{Y}_{n;\a}(\hbar,q)=\cD^{k,j}\ddot{\cY}_{n;\a}(\hbar,q)|_{\alpha=0}.$$
By restricting on $\alpha=0$, Theorem \ref{thm1'}, Theorem \ref{thm2'} and Theorem \ref{main1} give us the non-equivariant Theorem \ref{thm1}, Theorem \ref{thm2} and Theorem \ref{main1'}.

\end{document}